\documentclass[11pt,reqno]{amsart}

\usepackage[margin=1.1in]{geometry}
\usepackage{amsmath,amssymb,amsthm}
\usepackage{mathtools}
\usepackage{booktabs}
\usepackage{microtype}
\usepackage{enumitem}
\usepackage[colorlinks=true,linkcolor=blue,citecolor=blue,urlcolor=blue]{hyperref}

\theoremstyle{plain}
\newtheorem{theorem}{Theorem}[section]
\newtheorem{lemma}[theorem]{Lemma}
\newtheorem{proposition}[theorem]{Proposition}
\newtheorem{corollary}[theorem]{Corollary}
\theoremstyle{definition}
\newtheorem{definition}[theorem]{Definition}
\newtheorem{conjecture}[theorem]{Conjecture}
\newtheorem{remark}[theorem]{Remark}

\newcommand{\Z}{\mathbb{Z}}
\newcommand{\Q}{\mathbb{Q}}
\newcommand{\F}{\mathbb{F}}
\newcommand{\rk}{\operatorname{rank}}
\newcommand{\rad}{\operatorname{rad}}
\newcommand{\Cent}{\operatorname{C}}
\newcommand{\Fit}{\operatorname{Fit}}
\newcommand{\Irr}{\operatorname{Irr}}
\newcommand{\Gal}{\operatorname{Gal}}
\newcommand{\BG}{B_{G}}
\DeclareMathOperator{\Cchar}{C_{\mathrm{char}}}
\DeclareMathOperator{\Ccomm}{C_{\mathrm{comm}}}
\DeclareMathOperator{\OrbTr}{OrbTr}

\title[Game Conductors of Finite Groups]{Game Conductors of Finite Groups:\\
Determinantal Torsion from Structured Payoff Probes}
\author{Matthew Fried}
\address{Farmingdale State College, State University of New York, Farmingdale, NY, USA}
\email{friedm1@farmingdale.edu}
\subjclass[2020]{20C15, 20D60, 15A21, 05E16, 11C20}
\keywords{Smith normal form, determinantal divisor, commuting graph, CA-group,
maximal abelian subgroup, character table, rank-drop conductor, experimental group theory}
\date{\today}

\begin{document}

\begin{abstract}
We attach to a finite group $G$ and a structured payoff probe $\phi$ an
integer \emph{payoff-difference lattice} $M_\phi(G)$ and its \emph{conductor} $C_\phi(G)$:
the primes at which $M_\phi(G)$ loses rank modulo $p$. Our main result is an exact computation: for any
nonabelian CA-group the commuting conductor is $\rad(b-1)$, where $b$ is the number of maximal abelian
subgroups. In particular, commuting-conductor primes need not divide $|G|$: the prime $3$ occurs for a
$2$-group of order $64$ with $b=7$. The commuting Smith spectrum is an invariant of the isoclinism class
and obeys an exact direct-product law, giving
$\Ccomm(G\times H)=\Ccomm(G)\cup\Ccomm(H)$ unconditionally. A Galois-orbit-trace character
probe reads a complementary layer: an index-$2$ subgroup forces $2\in\Cchar(G)$ while the
odd-prime analogue fails for every odd prime, and $\Ccomm(D_{2q})=\{q\}$, $\Cchar(D_{2q})=\{2\}$ for all odd primes $q$.
Unconditionally $\Cchar(G)\subseteq\rad(|G|)$, so the escape from the group order is
exclusive to the commuting probe.
Exhaustive exact computation ($|G|\le128$ commuting, $|G|\le64$ character) and a
deformation-family analysis support the general program: classify the Smith torsion of the
compressed centralizer-type incidence matrix $\BG$.
\end{abstract}

\maketitle

\section{Introduction}\label{sec:intro}

In a $64\times64$ zero--one matrix record which pairs of elements of a group of order
$64$ commute, subtract rows, and ask modulo which primes the resulting integer lattice
loses rank. For $95.5\%$ of the $3{,}349$ nonabelian groups of order at most $128$,
computed exhaustively below, the answer is the set of primes dividing $|G'|$, the order of
the derived subgroup. But for $\mathrm{SmallGroup}(64,73)$ the answer is $\{2,3\}$: a
$2$-group whose commuting structure detects the prime $3$, a prime dividing neither the
group order nor any character degree. The reason, proved below, is that the invariant
computes not an order but a \emph{count}: this group has $b=7$ maximal abelian subgroups,
and the lattice's torsion is exactly $b-1=6$.

This paper studies that invariant in general. The motivating question is classical: how
much of a finite group is determined by its weakest relational data, the commuting
relation, and by its coarsest character data, the rational character table? Our answer is
a new place to look: not ranks or spectra over $\Q$, but the integer Smith torsion of the
associated difference lattices, read prime by prime. A \emph{game} on a finite group $G$ assigns a
payoff to each ordered pair drawn from index sets attached to $G$ (elements, conjugacy
classes, or irreducible characters); the differences of payoff rows span an integer lattice
$M_\phi(G)$, and its \emph{conductor} $C_\phi(G)$ is the set of primes at which the
lattice loses rank modulo $p$. The linear-algebraic engine is classical, Smith normal
forms of incidence-type matrices \cite{Stanley,Sin}, and the conductor is a bad-reduction
locus in the spirit of reduction-modulo-$p$ techniques elsewhere in algebra
\cite{EtingofGelaki}; what is new is the source of the lattice, not the technique. Our
contribution is the correspondence and its laws: a game on a group carries a conductor;
the conductor is tunable, with different games reading different structural layers; for
the commuting game it is governed by a small \emph{centralizer-type incidence matrix}
$\BG$ whose Smith torsion we compute exactly in a key case; and this torsion can escape
the prime support of $|G|$. The parts are deliberately classical, incidence matrices,
Smith forms, Galois orbits of characters; the laws they turn out to obey, the count
$b-1$, isoclinism invariance of the full spectrum, the exact product law, the escape from
$|G|$, and the unconditional bound $\Cchar(G)\subseteq\rad(|G|)$, are to our knowledge
new.

We prove five structural theorems and establish one empirical law:
\begin{enumerate}[label=\textup{(\arabic*)},leftmargin=2.2em,itemsep=2pt]
\item (Theorem~\ref{thm:CA}) For any nonabelian CA-group, $\Ccomm(G)=\rad(b-1)$ with $b$ the number of
maximal abelian subgroups; hence the conductor can contain primes dividing neither $|G|$ nor
any character degree (Corollary~\ref{cor:nondivisor}).
\item (Theorem~\ref{thm:isoclinism}) The Smith spectrum of the commuting lattice is an
isoclinism invariant: the conductor reads the commutator pairing $G/Z\times G/Z\to G'$, not
$G$ itself.
\item (Theorem~\ref{thm:product}) The commuting lattice of $G\times H$ is presented
diagonally by the multiset union of the factors' spectra and their pairwise products, so
the Smith spectrum of the product is determined by those of the factors and
$\Ccomm(G\times H)=\Ccomm(G)\cup\Ccomm(H)$; for CA-groups the diagonal presentation of the
product is computed in closed form.
\item (Section~\ref{sec:commuting}) The commuting conductor tracks $\rad(|G'|)$ for $95.5\%$
of groups of order $\le128$, and $p^2\mid|G'|$ forces $p$ into the conductor with no
exception in this range.
\item (Theorems~\ref{thm:index2},~\ref{thm:sharp}) An index-$2$ subgroup forces
$2\in\Cchar(G)$ for the Galois-orbit-trace character probe, and the odd-prime analogue
fails for every odd prime:
the probe is intrinsically $2$-biased. Moreover $\Cchar(G)\subseteq\rad(|G|)$
unconditionally (Corollary~\ref{cor:charbound}), so the escape phenomenon of (1) cannot
occur for the character probe.
\item (Theorem~\ref{thm:dihedral}) $\Ccomm(D_{2q})=\{q\}$ and $\Cchar(D_{2q})=\{2\}$ for odd
prime $q$: two probes reading disjoint layers.
\end{enumerate}
Table~\ref{tab:status} records the epistemic status of each statement; the distinction
proved / computed (exact and exhaustive on a stated range) / conjectural is maintained
throughout.

\begin{table}[t]
\centering
\small
\begin{tabular}{@{}lp{0.76\linewidth}@{}}
\toprule
Proved & Theorems~\ref{thm:instrument}, \ref{thm:CA}, \ref{thm:isoclinism},
\ref{thm:product}, \ref{thm:index2}, \ref{thm:sharp}, \ref{thm:dihedral};
Corollaries~\ref{cor:bge3}, \ref{cor:CAproduct}, \ref{cor:charbound}; existence in
Corollary~\ref{cor:nondivisor}; the reduction in Proposition~\ref{prop:Bg};
Proposition~\ref{prop:faithful} (classical)\\
\midrule
Computed & Propositions~\ref{prop:comm}, \ref{prop:p2}, \ref{prop:charmatch},
\ref{prop:compl}, \ref{prop:xor}; the tabulated spectra in Proposition~\ref{prop:Bg};
minimality of the order in Corollary~\ref{cor:nondivisor}; Tables~\ref{tab:CA}
and~\ref{tab:dih}\\
\midrule
Conjectural & Conjecture~\ref{conj:union} (ten-group sample)\\
\bottomrule
\end{tabular}
\caption{Status of the paper's statements: proved unconditionally; computed exactly and
exhaustively on the stated range; or conjectural.}
\label{tab:status}
\end{table}

The word \emph{game} is the discovery vehicle, and it does organizational work: the
commuting indicator is the payoff of a coordination game; viewing payoffs as tunable
turns one matrix into a systematic family of probes, which is exactly what generates the
deformation family of Section~\ref{sec:deform}; and decompositions of structured games
are a subject in their own right
\cite{Candogan}. The mathematics, however, is the Smith torsion of centralizer- and
character-incidence lattices; every statement and proof below is formulated
matrix-theoretically, so the game vocabulary can be dropped without loss. Three
literatures motivate the objects. First, the commuting
relation is arguably the weakest natural oracle on a finite group, and which invariants it
determines drives the commuting-graph literature \cite{AAM,GiudiciParker}, with roots in
Brauer--Fowler's centralizer analysis \cite{BrauerFowler}; the Smith spectrum is the
natural \emph{integral} invariant of that relation, strictly finer than spectra over
$\Q$, and Theorem~\ref{thm:isoclinism} answers exactly what it determines. Second,
because the spectrum is an isoclinism invariant, it belongs to P.~Hall's classification
program \cite{Hall40}, in which $p$-groups are organized into isoclinism families, as in
James's classification of the groups of order $p^6$ \cite{James}; computable family
invariants are the working tools there, and the spectrum is a cheap, exact, new one.
Third, computing Smith and critical groups of structured graph families is an active
subject \cite{Stanley,Biggs,Lorenzini,DuceySin,CSX}; the graph theory of commuting and
non-commuting graphs (connectivity, diameters, isomorphism questions) is well developed
\cite{AAM,GiudiciParker}, but we are not aware of
prior work on the \emph{integral} Smith normal forms or critical groups of commuting or
centralizer-incidence matrices of finite groups, and unlike the strongly regular families
these admit structural theorems, isoclinism invariance and an exact product law, not
visible from the graph alone.

\emph{Organization.} Section~\ref{sec:framework} fixes the framework: the conductor, the
two probes, and the duplication lemma used throughout.
Sections~\ref{sec:CA}--\ref{sec:deform} develop the commuting probe: the CA theorem
(Section~\ref{sec:CA}); the incidence matrix $\BG$, isoclinism invariance, and the product law
(Section~\ref{sec:Bg}); the exhaustive landscape to order $128$ (Section~\ref{sec:commuting}); and
deformation experiments (Section~\ref{sec:deform}).
Sections~\ref{sec:theorem}--\ref{sec:character} develop the character probe: the
forced-$2$ and odd-sharpness theorems (Section~\ref{sec:theorem}), then the empirical landscape
to order $64$ (Section~\ref{sec:character}). Section~\ref{sec:negatives} proves dihedral
complementarity, where the two probes provably read disjoint primes, and records two
negative controls. Sections~\ref{sec:open}--\ref{sec:repro} state the open program and
the reproducibility protocol. Throughout we distinguish what is \emph{proven}, what is
\emph{computed} (exact and exhaustive on a stated range), and what is \emph{conjectural}
(Table~\ref{tab:status}).

\section{The framework}\label{sec:framework}

This section fixes the three objects used throughout: the conductor of a payoff lattice,
the two probes that generate our lattices, and the duplication lemma that lets every
computation run on compressed matrices.

\begin{definition}[Game, lattice, conductor]
A \emph{game} on $G$ is a map $\phi\colon R\times S\to A$ into a free $\Z$-module $A$, where
$R$ and $S$ are index sets canonically attached to $G$ (elements, conjugacy classes, or
irreducible characters, per the game). The
payoff matrix is $P=(\phi(r,s))_{r\in R,s\in S}$; the \emph{payoff-difference lattice} is
$M_\phi(G)=\Z\text{-span}\{P_r-P_{r'}:r,r'\in R\}\subseteq A^S$; and with $r=\rk_\Q
M_\phi(G)$, the \emph{conductor} is $C_\phi(G)=\{p:\rk_{\F_p}(M_\phi(G)\bmod p)<r\}$.
\end{definition}

The conductor is an invariant of the pair $(G,\phi)$, preserved under group isomorphism and
relabeling. When $A$ has rank $d>1$ we identify $A^S$ with $\Z^{|S|d}$ by flattening
coordinates, so that matrix statements such as Theorem~\ref{thm:instrument} apply
verbatim; both probes studied in this paper have $A=\Z$.

\begin{theorem}[Instrument identity]\label{thm:instrument}
For $M\in\Z^{m\times n}$ of rational rank $r$, let $\Delta_r(M)$ be the gcd of the nonzero
$r\times r$ minors. Then $p\in C(M)\iff p\mid\Delta_r(M)$.
\end{theorem}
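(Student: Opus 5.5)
The plan is to use the Smith normal form. Here $C(M)$ means the set of primes $p$ with $\rk_{\F_p}(M\bmod p)<r$, where $M$ is viewed as the matrix whose rows span the lattice. The lattice depends only on the row span, and row span is preserved by unimodular row operations, so the conductor is well defined on matrices.

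Choose $U\in GL_m(\Z)$ and $V\in GL_n(\Z)$ with $UMV=D=\operatorname{diag}(d_1,\dots,d_r,0,\dots,0)$. Here $d_1\mid d_2\mid\cdots\mid d_r$ are nonzero, and there are exactly $r$ of them because rational rank is invariant under multiplication by matrices invertible over $\Q$.

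The first key step is to show that $\Delta_r$ is an invariant of the equivalence class. By Cauchy--Binet, each $r\times r$ minor of $UMV$ is a $\Z$-linear combination of $r\times r$ minors of $M$. Hence $\Delta_r(M)\mid\Delta_r(UMV)$. Applying the same argument with $U^{-1},V^{-1}$ gives the reverse divisibility, so $\Delta_r(M)=\Delta_r(D)$ up to sign. For the diagonal matrix $D$, the only nonzero $r\times r$ minor is $d_1\cdots d_r$. Therefore $\Delta_r(M)=\pm d_1\cdots d_r$.

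The second step is to reduce modulo $p$. Since $U$ and $V$ have determinant $\pm1$, their reductions mod $p$ are invertible over $\F_p$. It follows that $\rk_{\F_p}(M\bmod p)=\rk_{\F_p}(D\bmod p)$, which equals the number of indices $i$ with $p\nmid d_i$. This number is less than $r$ iff $p$ divides some $d_i$, iff $p\mid d_1\cdots d_r=\pm\Delta_r(M)$, which is the claim.

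An alternative route avoids the Smith form. A rank $<r$ over $\F_p$ means every $r\times r$ minor vanishes mod $p$, which is equivalent to $p\mid\Delta_r$. This uses only that a matrix over a field has rank $\ge r$ iff some $r\times r$ minor is nonzero, together with the fact that reducing a minor mod $p$ gives the corresponding minor of the reduction. It also uses that rank mod $p$ never exceeds $r$: every $(r+1)$-minor is $0$ over $\Z$, hence also mod $p$. I would actually present this shorter argument. The Smith form version is worth recording only because it identifies the invariant factors used later.

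The only potential obstacle is bookkeeping about what $C(M)$ means when $M$ is a generating set of the lattice rather than a basis. I would handle it by noting that any two generating matrices of the same lattice yield the same $\F_p$-span after reduction, namely the image of $M_\phi(G)\otimes\F_p\to\F_p^n$. By the minor argument, both matrices also have the same $\Delta_r$ up to units.
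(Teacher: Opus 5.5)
Your proposal is correct and takes essentially the paper's approach: the paper's proof is your shorter argument (the mod-$p$ rank drops below $r$ iff every $r\times r$ minor vanishes mod $p$, i.e.\ iff $p\mid\Delta_r(M)$), followed by a restatement via the Smith form identity $\Delta_r=d_1\cdots d_r$, which you justify in more detail through Cauchy--Binet. Your remark that any two generating matrices of the same lattice have the same $\Delta_r$ up to sign (again by Cauchy--Binet) makes explicit a well-definedness point that the paper leaves implicit.
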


\begin{proof}
The mod-$p$ rank falls below $r$ iff every $r\times r$ minor vanishes modulo $p$, i.e.\ iff
$p\mid\Delta_r(M)$. Equivalently, writing the Smith normal form $\operatorname{diag}(d_1\mid
d_2\mid\cdots)$, one has $\Delta_r=d_1\cdots d_r$, so $p\mid\Delta_r$ iff $p$ divides some
invariant factor $d_i$ with $i\le r$ \cite{Stanley}.
\end{proof}

\begin{definition}[The two probes]
The \emph{commuting probe} sets $R=S=G$ and $P_{g,h}=\mathbf 1_{[g,h]=1}$. The \emph{character
probe} sets $R=\Irr(G)$, $S$ the conjugacy classes, and payoff the \emph{Galois-orbit trace}
\[
\OrbTr(\chi)(c)=\sum_{\psi\in\,\Gal(\overline{\Q}/\Q)\cdot\chi}\psi(c),
\]
the sum of the values of the characters in the Galois orbit of $\chi$ (equivalently, the field
trace from $\Q(\chi)$ applied to $\chi(c)$).
\end{definition}

We fix the orbit-trace convention throughout. It makes the trivial character's row $\mathbf
1$, whereas the full cyclotomic trace from $\Q(\zeta_m)$ ($m=\exp G$) would scale it by
$\varphi(m)$; the distinction is exactly what makes the index-$2$ theorem
(Theorem~\ref{thm:index2}) and its sharpness (Theorem~\ref{thm:sharp}) mutually consistent.
Conceptually, the orbit-trace probe is the rational shadow of the character table, obtained
by merging Galois-conjugate irreducibles; it is deliberately lossy, and the question it
tests is which arithmetic obstructions survive rationalization. The faithful (non-lossy)
alternative is examined as a control in Section~\ref{sec:negatives}.

\begin{remark}[Instrument validation]
The rank-drop computation reproduces every known conductor on a control suite
($D_8$, $Q_8$, both extraspecial groups of order $27$, and
$S_3$, $A_4$, $D_{10}$, $S_4$, $A_5$, $\mathrm{PSL}(2,7)$). In particular $2\in\Ccomm(D_8)$ although
$G'(D_8)$ is central, refuting the heuristic that a central commutator subgroup makes its
prime invisible; rank and minor arguments are therefore unavoidable.
\end{remark}

\paragraph{Probe taxonomy.}
Not every probe tells you something. Some probes have an empty conductor on every group we
tested. We call these silent, and the class-algebra game is one (Section~\ref{sec:negatives}).
Other probes have the opposite problem. Their conductor is all of $\rad(|G|)$, so they flag
every prime dividing the group order and therefore separate nothing. A faithful cyclotomic
reduction of the character table is one of these, provably so
(Proposition~\ref{prop:faithful}). The probes worth studying sit between these
two extremes. Their conductor picks out one structural layer of the group and stays blind to
the rest. We call these selective. The commuting game and the orbit-trace character game are
our two selective probes, and they are the subject of this paper. Most probes are not
selective, which is why the selective ones are interesting.

We will use three invariants of increasing precision, so it helps to name them once. The
conductor is a set of primes. The Smith spectrum is the list of nontrivial invariant factors
of the lattice, and the conductor is what remains when you record only which primes divide
those factors. The top determinantal divisor $\Delta_r$ is the single integer that connects
them, because its prime factors are exactly the conductor (Theorem~\ref{thm:instrument}). The
spectrum carries more information than the conductor. So whenever we prove something at the
level of the spectrum, as we do for isoclinism invariance and for products, the conductor
version follows immediately. The converse fails.

\begin{remark}[Row lattice versus difference lattice]\label{rem:whydiff}
The difference lattice, not the row lattice, is the translation-invariant object: adding
a fixed vector to every payoff row (re-basing all payoffs against a benchmark) changes
the row lattice but not the differences, and a conductor should not depend on such a
normalization. For the commuting probe the choice is in fact immaterial: every row has
entry $1$ in the central column, which yields an integral splitting
$R=D\oplus\Z\mathbf 1$ of the row lattice (exhibited in the proof of
Theorem~\ref{thm:product}), so the row and difference lattices have the same nontrivial
invariant factors. For the character probe it is material: for $C_p$ the two orbit-trace
rows $(1,\dots,1)$ and $(p-1,-1,\dots,-1)$ span a row lattice with Smith form $(1,p)$,
while the difference lattice is unimodular (Theorem~\ref{thm:sharp}); the row-lattice
variant is a different, also reasonable, probe that would see the prime $p$, and we fix
the translation-invariant one throughout.
\end{remark}

One bookkeeping lemma completes the setup. In the commuting matrix, two elements with the
same centralizer have identical rows, and by symmetry of the commuting relation identical
columns as well. Every computation in this paper therefore runs on the small matrix of
distinct \emph{centralizer types} rather than on the full $|G|\times|G|$ matrix. The
lemma says this compression loses nothing.

\begin{lemma}[Duplicates preserve determinantal divisors]\label{lem:dup}
Let $M'$ be obtained from an integer matrix $M$ by duplicating rows and/or columns. Then
$M$ and $M'$ have the same nonzero determinantal divisors $\Delta_k$, and hence the same
nontrivial Smith invariant factors.
\end{lemma}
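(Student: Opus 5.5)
It suffices to treat a single duplication, since the general case follows by induction on the number of duplicated rows and columns. By symmetry, transposition preserves all minors, so we may assume $M'$ is $M$ with one extra row equal to an existing row $M_i$. (Row reordering changes minors only up to sign, so we place the copy anywhere convenient.) The goal is to show that $\Delta_k(M')=\Delta_k(M)$ for every $k$ at which $M$ has a nonzero $k\times k$ minor. We will also note that $M$ and $M'$ have the same rational rank, so the set of $k$ with nonzero $\Delta_k$ is the same for both.

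The key step is to compare the two sets of $k\times k$ minors. Every $k\times k$ submatrix of $M$ is also a submatrix of $M'$, so every minor of $M$ is a minor of $M'$. Conversely, take a $k\times k$ submatrix of $M'$. There are three cases:
\begin{enumerate}[label=\textup{(\roman*)}]
\item It avoids the new row. Then it is a submatrix of $M$.
\item It uses the new row but not row $i$. Replacing the copy by row $i$ gives a submatrix of $M$ with the same rows up to ordering, so its minor agrees up to sign.
\item It uses both the copy and row $i$. Then it has two equal rows, and its minor is $0$.
\end{enumerate}
So the multiset of nonzero $k\times k$ minors of $M'$ equals, up to sign, the set of nonzero $k\times k$ minors of $M$ (with repetitions). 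Taking gcds gives $\Delta_k(M')=\Delta_k(M)$, and nonvanishing of $\Delta_k$ matches as well. In particular the rational ranks agree.

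To pass to Smith invariant factors, we use the classical relation $d_k=\Delta_k/\Delta_{k-1}$ for $k\le r$, with $\Delta_0=1$, as already invoked in the proof of Theorem~\ref{thm:instrument}. Equal determinantal divisors for $k\le r$ then give equal $d_1,\dots,d_r$. The two Smith forms therefore differ only in the number of zero rows and columns, which is what ``same nontrivial invariant factors'' means. Combined with Theorem~\ref{thm:instrument}, this also shows that $M$ and $M'$ have the same conductor.

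I expect no real obstacle here. The only point needing care is the bookkeeping in case (iii), namely that such minors vanish rather than contributing new values. It also bears stating that column duplication reduces to row duplication via $\Delta_k(M^{T})=\Delta_k(M)$.
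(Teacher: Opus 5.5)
Your proof is correct and takes essentially the same approach as the paper: minors of $M'$ that use two copies of a line vanish, the remaining ones are (up to sign) minors of $M$, and conversely every minor of $M$ occurs in $M'$, so the $\Delta_k$ agree and $d_k=\Delta_k/\Delta_{k-1}$ carries this over to the invariant factors. The only difference is cosmetic: you reduce to a single row duplication by induction and transposition, whereas the paper handles all duplicated rows and columns at once.
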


\begin{proof}
Compare the $k\times k$ minors of the two matrices. A minor of $M'$ that uses two copies
of the same row or column vanishes, since its matrix has a repeated line; a minor that
uses at most one copy of each line is a minor of $M$. Conversely, every minor of $M$
appears among the minors of $M'$. The two matrices therefore have the same set of nonzero
$k\times k$ minors for every $k$, so the same $\Delta_k$, and the nontrivial invariant
factors are determined by the $\Delta_k$ via $d_k=\Delta_k/\Delta_{k-1}$.
\end{proof}

\section{The CA theorem: counting maximal abelian subgroups}\label{sec:CA}

We begin with the paper's main theorem: an exact computation of the commuting conductor
for a classical family, and with it the first example of a conductor prime outside the
group order. A finite group is a \emph{CA-group} if the centralizer of every non-central element is
abelian; equivalently, its non-central elements partition into the maximal abelian subgroups,
which pairwise meet exactly in $Z(G)$, a configuration in the tradition of group
partitions \cite{Baer}. The class is classical \cite{Suzuki} and includes
$\mathrm{SL}(2,q)$, dihedral and generalized quaternion groups, and many $2$-groups.
(Every abelian group is vacuously CA; the theorem below concerns the nonabelian ones,
for which $b$ is defined and, by Corollary~\ref{cor:bge3}, at least $3$.)

\begin{theorem}[Commuting conductor of CA-groups]\label{thm:CA}
Let $G$ be a finite nonabelian CA-group whose non-central elements lie in $b$ maximal abelian subgroups.
Then the commuting difference lattice has a single nontrivial Smith invariant factor equal to
$b-1$, so
\[
\Ccomm(G)=\rad(b-1).
\]
\end{theorem}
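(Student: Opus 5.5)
The plan is to compress the commuting matrix to its centralizer types using Lemma~\ref{lem:dup}, and then compute the Smith form of the resulting small difference lattice by hand.

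\textbf{Step 1: the centralizer types.} Let $Z=Z(G)$ and let $A_1,\dots,A_b$ be the maximal abelian subgroups. By the CA hypothesis, the sets $A_i\setminus Z$ partition $G\setminus Z$, and $A_i\cap A_j=Z$ for $i\ne j$. For $g\in A_i\setminus Z$ the centralizer $\Cent_G(g)$ is abelian and contains $A_i$, so by maximality $\Cent_G(g)=A_i$. Hence there are exactly $b+1$ centralizer types: $Z$ itself, and the $b$ sets $A_i\setminus Z$.

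\textbf{Step 2: the compressed matrix.} Order the types as $(Z,A_1,\dots,A_b)$. Then the compressed matrix is
\[
B=\begin{pmatrix}1&\mathbf 1^{T}\\ \mathbf 1& I_b\end{pmatrix}.
\]
The first row is all ones because central elements commute with everything. The row for $A_i$ has a $1$ in the $Z$-column and a $1$ in column $i$. Its entry in column $j\ne i$ is $0$: elements of $A_i\setminus Z$ and $A_j\setminus Z$ do not commute, since otherwise $A_j\setminus Z$ would meet $\Cent_G(g)=A_i$.

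\textbf{Step 3: reducing to $B$.} I need a small bookkeeping point here. Fix a central reference row $P_z$. The difference lattice $M$ is the row lattice of the matrix $D$ with rows $P_r-P_z$. Duplicated rows of $P$ give duplicated rows of $D$, and duplicated columns of $P$ give duplicated columns of $D$. So $D$ is obtained from the compressed difference matrix $D_B$ (rows $B_t-B_Z$ over types $t$) by duplicating rows and columns, plus adjoining zero rows (for the central elements $r$, where $P_r-P_z=0$). Zero rows do not change any nonzero minor. Lemma~\ref{lem:dup} then gives that $M$ and the row lattice of $D_B$ have the same nontrivial invariant factors. I expect this step to be the main obstacle only in the sense of care: the lemma is stated for matrices, not difference lattices, so one must pass through a fixed reference row as above.

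\textbf{Step 4: the Smith form.} The rows of $D_B$ are $(0,\,e_i-\mathbf 1)$ for $i=1,\dots,b$. The $Z$-column is zero and can be dropped. The lattice $L\subseteq\Z^b$ they span contains $e_i-e_j$ for all $i,j$, so it contains the whole sum-zero lattice $\{x:\sum x_k=0\}$. It also contains $\mathbf 1-e_1$, whose coordinate sum is $b-1$. Every generator has coordinate sum $\equiv0\pmod{b-1}$, so
\[
L=\{x\in\Z^b:\textstyle\sum x_k\equiv 0\pmod{b-1}\}.
\]
This lattice has rank $b$ (Corollary~\ref{cor:bge3} gives $b\ge3$) and index $b-1$ in its saturation $\Z^b$. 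Moreover $\Z^b/L\cong\Z/(b-1)$ is cyclic via the coordinate-sum map, so the Smith form is $(1,\dots,1,b-1)$. This gives exactly one nontrivial invariant factor, equal to $b-1$. Theorem~\ref{thm:instrument} then yields $\Ccomm(G)=\rad(b-1)$.
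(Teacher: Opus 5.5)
Your proof is correct and has the same architecture as the paper's. Both use the $b+1$ centralizer types, the compressed difference rows $e_i-\mathbf 1$ (the rows of $-(J_b-I_b)$), and Lemma~\ref{lem:dup} to transfer back to the full matrix. Your Step 3 makes explicit something the paper leaves implicit: the difference lattice is the row lattice of $P_r-P_z$ for a fixed central $z$, so the lemma really applies to a matrix.

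The genuine divergence is in computing the Smith form. The paper works with determinantal divisors. It reads $\det(J_b-I_b)=\pm(b-1)$ off the eigenvalues. It then needs the adjugate $(-1)^{b-1}(J_b-(b-1)I_b)$, whose entries are $\pm1$ off the diagonal, to get $\Delta_{b-1}=1$. The paper notes that without this step a splitting such as $(2,6)$ for $b-1=12$ could not be excluded.

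You instead identify the lattice exactly, as the kernel of $x\mapsto\sum_k x_k \bmod (b-1)$. Then $\Z^b/L\cong\Z/(b-1)$ is visibly cyclic, and cyclicity of the cokernel is precisely what rules out such splittings, with no minor computation at all. Your route is shorter and more conceptual, since it describes the lattice itself rather than only its invariants.

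The paper's route has the advantage of producing the $\Delta_k$ directly. Those are the quantities that Theorem~\ref{thm:instrument} and Lemma~\ref{lem:dup} are stated in. Your route silently uses the standard dictionary between cokernel structure and determinantal divisors. It also uses the fact that these divisors depend only on the lattice, not on the chosen generating matrix.

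A minor point: you only need $b\neq1$ for full rank, and this is immediate because $b=1$ forces $G=A_1$ to be abelian. Citing Corollary~\ref{cor:bge3} is harmless, though, since its $b\ge3$ part does not depend on the theorem.
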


\begin{proof}
Let $Z=Z(G)$ and let $A_1,\dots,A_b$ be the maximal abelian subgroups, with non-central parts
$B_k=A_k\setminus Z$ partitioning the non-central elements and $A_i\cap A_j=Z$ for $i\neq j$.

\emph{Row types.} The centralizer-indicator row of $g$ is the indicator of $C_G(g)$. For
central $g$, $C_G(g)=G$, an all-ones row. For non-central $x\in B_k$, the CA-hypothesis gives
$C_G(x)=A_k$, the unique maximal abelian subgroup containing $x$. Hence there are exactly
$b+1$ distinct rows: the all-ones row $\mathbf 1$ and one row $\rho_k=\mathbf 1_{A_k}$ per
block.

\emph{Compression.} Index columns by the same $b+1$ types via representatives: a central $z$
and one $x_j\in B_j$ per block. Then $\rho_k(z)=1$ (as $z\in Z\subseteq A_k$), and
$\rho_k(x_j)=\mathbf 1_{x_j\in A_k}=\mathbf 1_{j=k}$, because distinct maximal abelian
subgroups meet only in $Z$, which contains no $x_j$. Subtracting the all-ones row $\mathbf 1$
from each $\rho_k$: the central column gives $0$, and on the $b$ non-central columns the row
becomes $e_k-\mathbf 1_b$, the $k$-th row of $-(J_b-I_b)$, where $J_b$ is all-ones and $I_b$
the identity.

\emph{Smith form of $J_b-I_b$.} The matrix $J_b-I_b$ has eigenvalue $b-1$ once (eigenvector
$\mathbf 1_b$) and $-1$ with multiplicity $b-1$, hence is nonsingular with
$\det(J_b-I_b)=(-1)^{b-1}(b-1)$. Its entries have gcd $1$, so $\Delta_1=1$. Moreover
$(J_b-I_b)^{-1}=\tfrac{1}{b-1}\bigl(J_b-(b-1)I_b\bigr)$, so
$\operatorname{adj}(J_b-I_b)=(-1)^{b-1}\bigl(J_b-(b-1)I_b\bigr)$, whose off-diagonal entries
are $\pm1$; since the adjugate entries are the $(b-1)\times(b-1)$ cofactors, the
$(b-1)$-st determinantal divisor is $\Delta_{b-1}=1$. Hence $d_1=\cdots=d_{b-1}=1$, and the
determinant forces $d_b=b-1$: the Smith normal form is $\operatorname{diag}(1,\dots,1,b-1)$.
(The adjugate step is needed: $\Delta_1=1$ together with $\prod_i d_i=b-1$ alone would not
exclude splittings such as $(2,6)$ when $b-1=12$.)

\emph{From compressed to full lattice.} In the full commuting matrix each column type is
repeated according to its block size (and the central type $|Z|$ times). Repeating a column
multiplies the number of maximal minors but leaves every individual nonzero maximal minor
equal to the corresponding compressed minor (one representative column per type is chosen);
so the gcd of all maximal minors is unchanged (Lemma~\ref{lem:dup}). Hence the full
difference lattice has
determinantal divisors $\Delta_1=\cdots=\Delta_{b-1}=1$ and $\Delta_b=b-1$, i.e.\ a single
nontrivial invariant factor $b-1$. No equal-block-size assumption is used.

By Theorem~\ref{thm:instrument}, $p\in\Ccomm(G)\iff p\mid b-1$, i.e.\ $\Ccomm(G)=\rad(b-1)$.
\end{proof}

\begin{corollary}[Nonempty conductor]\label{cor:bge3}
Every nonabelian CA-group has $b\ge3$, hence $b-1\ge2$ and $\Ccomm(G)\neq\emptyset$.
\end{corollary}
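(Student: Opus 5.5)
The plan is to show directly that a nonabelian CA-group cannot have $b\le 2$. The nonemptiness claim then follows from Theorem~\ref{thm:CA}: once $b\ge3$ we have $b-1\ge2$, so $\rad(b-1)$ contains at least one prime. Fix the notation from the proof of Theorem~\ref{thm:CA}. The maximal abelian subgroups $A_1,\dots,A_b$ pairwise meet in $Z=Z(G)$, and their non-central parts partition $G\setminus Z$. Since $G$ is nonabelian, $G\ne Z$, so $b\ge1$.

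First I rule out $b=1$. In that case $G\setminus Z\subseteq A_1$, so $G=Z\cup A_1=A_1$. Then $G$ is abelian, a contradiction.

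Next I rule out $b=2$. Here $G=A_1\cup A_2$, and each $A_i$ is a proper subgroup, since otherwise $G$ would be abelian. The key step is the classical fact that a group is never the union of two proper subgroups. To see this, pick $x\in A_1\setminus A_2$ and $y\in A_2\setminus A_1$. Both exist because neither subgroup contains the other: if $A_1\subseteq A_2$, then $G=A_2$. Now consider $xy$. If $xy\in A_1$, then $y=x^{-1}(xy)\in A_1$, a contradiction. If $xy\in A_2$, then $x=(xy)y^{-1}\in A_2$, again a contradiction. So $xy\notin A_1\cup A_2=G$, which is absurd.

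Hence $b\ge3$. I expect no real obstacle here. The only point needing care is to use the partition structure from Theorem~\ref{thm:CA} to get $G=Z\cup\bigcup_k A_k=\bigcup_k A_k$, which holds because $Z\subseteq A_k$ for every $k$. After that the argument is the standard two-subgroup union lemma, and Theorem~\ref{thm:CA} gives $\Ccomm(G)=\rad(b-1)\ne\emptyset$.
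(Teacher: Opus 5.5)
Your proof is correct and follows the paper's argument exactly. Both use $G=\bigcup_k A_k$ (since $Z\subseteq A_k$), note that $b=1$ forces $G$ abelian, exclude $b=2$ because no group is a union of two proper subgroups, and then apply Theorem~\ref{thm:CA} to turn $b-1\ge2$ into a nonempty $\rad(b-1)$. The only difference is that you prove the two-subgroup union lemma, where the paper simply cites it as classical.
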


\begin{proof}
Since $Z(G)$ lies in every maximal abelian subgroup, $G=A_1\cup\cdots\cup A_b$ with each
$A_k$ proper ($G$ is nonabelian). If $b=1$ then $G=A_1$ is abelian; and no group is the
union of two proper subgroups. So $b\ge3$, hence $b-1\ge2$ and $\Ccomm(G)=\rad(b-1)\neq\emptyset$ by
Theorem~\ref{thm:CA}.
\end{proof}

\begin{remark}[The floor case]\label{rem:floor}
The floor $b=3$ is attained exactly when $G/Z(G)\cong C_2\times C_2$, i.e.\ on the
isoclinism class of $D_8$; these are exactly the nonabelian groups of maximal commuting
probability $5/8$ \cite{Gustafson}. If $G/Z\cong C_2\times C_2$, the three intermediate subgroups
$Z<A_i<G$ satisfy $A_i=\langle Z,x_i\rangle$, hence are abelian of index $2$; every
non-central $x$ then has $C_G(x)=A_i$ for its $A_i$, so $G$ is CA with $b=3$. Conversely,
suppose $G$ is nonabelian CA with $b=3$, put $n=|G/Z|$ and $a_i=|A_i/Z|$ with
$a_1\le a_2\le a_3$: the sets $A_i/Z\setminus\{1\}$ partition $G/Z\setminus\{1\}$, so
$a_1+a_2+a_3=n+2$, while trivial pairwise intersections in $G/Z$ give
$a_ia_j=|(A_i/Z)(A_j/Z)|\le n$. From $a_2,a_3\le n/a_1$ we get
$n+2-a_1\le 2n/a_1$, i.e.\ $(a_1-2)\,n\le a_1(a_1-2)$, which forces $a_1=2$ since
$a_1\le n/2<n$; then $a_2+a_3=n$, while
$a_2a_3\ge 2a_3\ge a_2+a_3=n$ and $a_2a_3\le n$, so equality holds throughout:
$a_2=a_3=2$ and $n=4$. Since $G/Z$ is never cyclic, $G/Z\cong C_2\times
C_2$; the commutator pairing is then the unique nondegenerate alternating pairing
$V_4\times V_4\to C_2$ (here $G'\subseteq Z$, so the pairing is bilinear, and
$[x,y]^2=[x,y^2]=e$ gives $G'\cong C_2$), a single isoclinism class. Consistently,
$D_8$, $Q_8$, and $\mathrm{SmallGroup}(16,3)$ all have spectrum $[2]$
(Table~\ref{tab:CA}).
\end{remark}

\begin{corollary}[Non-divisor primes]\label{cor:nondivisor}
$\Ccomm(G)$ can contain primes dividing neither $|G|$ nor any character degree. The
smallest order at which this occurs is $64$ (by the exhaustive computation of
Section~\ref{sec:commuting}), attained by
$G=\mathrm{SmallGroup}(64,73)\cong(C_2\times C_2\times D_8)\rtimes C_2$, a CA-group
with $b=7$, and, with the same spectrum, by its isoclinism family
$\mathrm{SmallGroup}(64,k)$, $k=73,\dots,82$ (Theorem~\ref{thm:isoclinism}): the
commuting difference lattice has rank $7$, invariant factor $6$,
and $\Ccomm(G)=\rad(6)=\{2,3\}$ with $3\nmid64$.
\end{corollary}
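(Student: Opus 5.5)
The corollary has three parts. The first is existence: some group has a commuting-conductor prime dividing neither $|G|$ nor any character degree. The second is the specific example $\mathrm{SmallGroup}(64,73)$ and its isoclinism family. The third is minimality of the order $64$. I would prove existence by exhibiting the one group and applying Theorem~\ref{thm:CA}. The isoclinism-family statement would come from Theorem~\ref{thm:isoclinism}. Minimality is a computational claim, so I would only cite the exhaustive search.

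\emph{Existence.} Take $G=\mathrm{SmallGroup}(64,73)$. First I will verify that $G$ is a CA-group, meaning every non-central element has an abelian centralizer. Then I will count its maximal abelian subgroups and show $b=7$. I would do this through the quotient $\bar G=G/Z(G)$. If $|Z(G)|=4$ and $\bar G$ is elementary abelian of order $16$, the maximal abelian subgroups of a CA-group map to subgroups $\bar A_k$ that pairwise meet trivially and partition $\bar G\setminus\{1\}$. Centralizer sizes then give the counts. For example, if every non-central element has a centralizer of order $16$, each $\bar A_k$ has order $4$ and contributes $3$ nontrivial elements, so $b=15/3=5$. So the real verification must come from the actual centralizer orders. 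I expect a mixture of block sizes that yields $7$, such as one $\bar A_k$ of order $8$ together with six of order $2$ in a quotient of order $16$ ($7+6=13$ does not fit), or some other configuration.

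This counting step is the main obstacle. I would settle it by direct computation of centralizers in a presentation of $(C_2\times C_2\times D_8)\rtimes C_2$, or by citing Table~\ref{tab:CA}. Once $b=7$ is established, Theorem~\ref{thm:CA} gives a single nontrivial invariant factor $b-1=6$, with rank $b=7$. Hence $\Ccomm(G)=\rad(6)=\{2,3\}$.

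\emph{Non-divisor.} This part is immediate. The prime $3\nmid 64=|G|$, and by Frobenius every character degree divides $|G|$, so each degree is a power of $2$ and none is divisible by $3$.

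\emph{Family and minimality.} The groups $\mathrm{SmallGroup}(64,k)$ for $73\le k\le 82$ form one isoclinism class; this is a standard fact that I would check or cite. Theorem~\ref{thm:isoclinism} then transfers the spectrum $[6]$ to every member of the family. Minimality of the order $64$ I would not prove theoretically. It follows from the exhaustive computation of Section~\ref{sec:commuting}: for every nonabelian group of order below $64$, $\Ccomm(G)\subseteq\rad(|G|)$. This is consistent with Table~\ref{tab:status}, which lists minimality as computed.
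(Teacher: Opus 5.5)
Your outline matches the paper's own argument. It applies Theorem~\ref{thm:CA} with $b=7$, notes that a $2$-group has only $2$-power character degrees, uses Theorem~\ref{thm:isoclinism} for $k=73,\dots,82$, and relies on the exhaustive search for minimality. The paper also takes ``$G$ is CA with $b=7$'' as computed input (its audit trail and Table~\ref{tab:CA}). So your fallback of citing the table or computing centralizers closes the argument in the same way the paper does.

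You should discard the heuristic you sketch for $b=7$. It starts from a false premise, so it could never produce the count. For $\mathrm{SmallGroup}(64,73)$ one has $|Z(G)|=8$ (and $|G'|=8$), not $|Z(G)|=4$.

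Your own scenario ``every non-central element has centralizer of order $16$'' is in fact the correct one. With $|Z|=8$ it gives $|\bar A_k|=2$, so $\bar G=G/Z$ has order $8$. The sets $\bar A_k\setminus\{1\}$ are then the seven singletons of $\bar G\setminus\{1\}$, and $b=7$, matching the paper's check $7\cdot(16-8)=64-8$.

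Since $x^2Z\in\bar A_k=\{Z,xZ\}$ and $x\notin Z$, every $x^2$ lies in $Z$, so $\bar G\cong C_2^3$. A structural way to confirm the whole picture from a presentation is to check that the commutator pairing induces an isomorphism $\Lambda^2\bar G\to G'$. Then $[x,y]=e$ iff $\bar x,\bar y$ are dependent, and each $C_G(x)=\langle x\rangle Z$ is abelian of order $16$.

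The mixed block configurations inside a quotient of order $16$ that you were searching for do not arise.
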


\begin{remark}[Audit trail for $\mathrm{SmallGroup}(64,73)$]
The full audit trail for the headline example: $|G|=64$, $|Z(G)|=8$, $|G'|=8$; $G$ is CA
with $b=7$ maximal abelian subgroups, each of order $16$, so $7\cdot(16-8)=64-8$ accounts
for all non-central elements; the compressed matrix has $b+1=8$ centralizer types, the
difference lattice has rational rank $7$ and Smith form $\operatorname{diag}(1^6,6)$, and
mod-$3$ rank $6<7$, so $3\in\Ccomm(G)$. Since $G$ is a $2$-group its character degrees are
powers of $2$, so $3$ divides neither $|G|$ nor any character degree. All values are
verified in GAP and independently in Python.
\end{remark}

\begin{table}[t]
\centering
\begin{tabular}{@{}lcccc@{}}
\toprule
$G$ & $|G|$ & $b$ & $b-1$ & $\Ccomm(G)$\\
\midrule
$\mathrm{SmallGroup}(16,3)$    & 16  & 3  & 2  & $\{2\}$\\
$\mathrm{SL}(2,3)$             & 24  & 7  & 6  & $\{2,3\}$\\
$\mathrm{GL}(2,3)$             & 48  & 13 & 12 & $\{2,3\}$\\
$\mathrm{SmallGroup}(64,73)$   & 64  & 7  & 6  & $\{2,3\}\ (3\nmid|G|)$\\
$\mathrm{SmallGroup}(128,1544)$& 128 & 13 & 12 & $\{2,3\}\ (3\nmid|G|)$\\
$\mathrm{SL}(2,5)$             & 120 & 31 & 30 & $\{2,3,5\}$\\
\bottomrule
\end{tabular}
\caption{Verification of Theorem~\ref{thm:CA} (exact, GAP): the Smith invariant factor
equals $b-1$ in each case. $\mathrm{SL}(2,5)$, with $31$ maximal abelian subgroups, realizes
$\Ccomm=\rad(30)=\{2,3,5\}$.}
\label{tab:CA}
\end{table}

\begin{remark}[Interpretation]
On CA-groups the commuting game reads no subgroup \emph{order}: it reads the \emph{number}
$b$ of maximal abelian subgroups, an integer with no a priori relation to $|G|$. Because
$b-1$ may be divisible by any prime, the conductor escapes the group order, which is impossible for
any invariant built from $|G|$, character degrees, or exponents, but natural for one that
encodes a structural \emph{count}. Many order- and degree-based invariants are supported on
primes dividing $|G|$; the commuting conductor is not, precisely because it can encode such
counts. This also separates the commuting conductor from the Bogomolov multiplier $B_0(G)$: as
$B_0$ of a $2$-group is a $2$-group, the prime $3$ in $\Ccomm(\mathrm{SmallGroup}(64,73))$ is
not $B_0$-torsion. Both invariants are sensitive to abelian-subgroup structure
\cite{JezernikMoravec,Kunyavskii,Moravec}, but $B_0$ is cohomological torsion while $\Ccomm$
is determinantal torsion of a centralizer incidence lattice.
\end{remark}

\section{The centralizer-type incidence matrix, isoclinism, and products}\label{sec:Bg}

This section identifies the general object behind Theorem~\ref{thm:CA} and proves the two
structural theorems that govern it: the Smith spectrum is an isoclinism invariant, and it
obeys an exact direct-product law. The CA theorem is the first solved case of that object:
collapse the commuting matrix by identifying elements with identical centralizers; the
distinct centralizer types index both rows and columns, giving a small integer matrix
$\BG$ with a multiplicity vector recording type sizes, and by Lemma~\ref{lem:dup} the
compression is lossless.

\begin{definition}[Centralizer types and $\BG$]\label{def:Bg}
Call $x,y\in G$ of the same \emph{centralizer type} if $C_G(x)=C_G(y)$. Let
$x_1,\dots,x_T$ be representatives of the $T$ types, the first being the central type
$\{x:C_G(x)=G\}=Z(G)$, and let $m_i$ be the number of elements of type $i$. The
\emph{centralizer-type incidence matrix} is the $T\times T$ zero--one matrix
\[
(\BG)_{ij}=\mathbf 1_{[x_i,x_j]=e},
\]
carried together with the multiplicity vector $(m_1,\dots,m_T)$; when several groups are
in play we write $B_G,B_H$. The matrix is well defined and symmetric: if $C_G(x)=C_G(x')$
then, for every $y$, $[x,y]=e\iff y\in C_G(x)=C_G(x')\iff[x',y]=e$, so the row of a type
does not depend on the representative chosen, and since $[x_i,x_j]=e\iff x_i\in C_G(x_j)$
the same argument covers columns. The first row and column are all ones.
\end{definition}

\begin{proposition}[$\BG$ controls the conductor]\label{prop:Bg}
The nontrivial Smith invariant factors of the commuting difference lattice equal those of the
row-difference lattice of $\BG$. This is a proven reduction: by Definition~\ref{def:Bg},
rows \emph{and} columns of the same centralizer type in the full commuting matrix are
identical, so the full matrix is $\BG$
with rows and columns duplicated by type sizes; Lemma~\ref{lem:dup} applies. Representative
values are
\[
\begin{array}{lccl}
G & |G| & \#\text{types} & \text{row-difference Smith factors of }\BG\\\hline
\mathrm{SL}(2,5) & 120 & 32 & [30]\\
\mathrm{SmallGroup}(64,10) & 64 & 12 & [2,2,4]\\
\mathrm{SmallGroup}(96,64) & 96 & 33 & [4,4,4,16]\\
\mathrm{SmallGroup}(64,134) & 64 & 23 & [2,2,2,2,2,2,8,8,32]\\
\mathrm{SmallGroup}(32,49) & 32 & 16 & [2,2,2,2,2,2,4,4,4,4,8].
\end{array}
\]
For CA-groups $\BG$ reduces, on its non-central block, to $J-I$, recovering
Theorem~\ref{thm:CA}; the layered non-CA case has several invariant factors.
\end{proposition}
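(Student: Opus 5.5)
The plan is to pass from the lattice to a single generating matrix, and then apply Lemma~\ref{lem:dup} to that matrix. Let $P$ be the full $|G|\times|G|$ commuting matrix. The identity $e$ is central, so its row $P_e$ is the all-ones vector $\mathbf 1$. Since $P_g-P_h=(P_g-\mathbf 1)-(P_h-\mathbf 1)$, the difference lattice $M_{\mathrm{comm}}(G)$ is the row lattice of the matrix $D_P$ whose rows are $P_g-\mathbf 1$, $g\in G$. In the same way, the row-difference lattice of $\BG$ is the row lattice of $D_B$, whose rows are $(\BG)_i-\mathbf 1_T$; this works because the first row of $\BG$ is all ones (Definition~\ref{def:Bg}).

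First I will check that the Smith invariant factors of a lattice do not depend on the generating matrix chosen. Any two generating matrices of the same row lattice can each be brought, by unimodular row operations, to a basis matrix padded with zero rows. Unimodular row operations preserve every $\Delta_k$, and zero rows contribute no nonzero minors. So the nontrivial invariant factors of the lattice are those of $D_P$, and those of the second lattice are those of $D_B$.

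Second, I will identify $D_P$ with a duplication of $D_B$. By Definition~\ref{def:Bg}, $P$ is obtained from $\BG$ by repeating row $i$ and column $j$ according to the type multiplicities $m_i$ and $m_j$. Subtracting the all-ones vector is entrywise, so it commutes with duplicating columns. Hence $D_P$ is $D_B$ with row $i$ repeated $m_i$ times and column $j$ repeated $m_j$ times. Lemma~\ref{lem:dup} then gives equal nonzero determinantal divisors, so $D_P$ and $D_B$ have the same nontrivial invariant factors. Combined with the first step, this proves the reduction.

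Third, the CA remark follows directly. For a CA-group, the non-central rows of $D_B$ restricted to the non-central columns are the rows of $-(J_b-I_b)$, and the central column of $D_B$ is zero. So the computation in the proof of Theorem~\ref{thm:CA} is exactly the present reduction. The tabulated spectra are not proved; they are exact machine computations of the Smith form of $D_B$, in GAP with an independent Python check, and are reported as computed, consistent with Table~\ref{tab:status}.

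The only step needing care is the first: that the invariant factors are attached to the lattice rather than to a particular spanning set. This matters because the definition uses all pairwise differences, while the reduction uses differences against $e$. I would settle it with the remark that equal row lattices give generating matrices related by unimodular row operations, up to zero rows. Everything else is bookkeeping.
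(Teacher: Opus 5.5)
Your proposal is correct and is essentially the paper's argument: the full commuting matrix is $\BG$ with rows and columns repeated by the type multiplicities, and Lemma~\ref{lem:dup} transfers the nontrivial invariant factors. Your one refinement makes explicit a step the paper's one-line justification leaves implicit: you apply the lemma to the difference-generating matrices $D_P$ and $D_B$ (rows minus the all-ones central row) and check that invariant factors depend only on the lattice. You are also right that the tabulated spectra are computed, not proved.
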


The matrix $\BG$ compresses an $|G|^2$ computation to (number of types)$^2$, for instance
$32\times32$ for $\mathrm{SL}(2,5)$ rather than $120\times120$, while preserving the torsion
exactly.
The conductor records only the \emph{radical} of this torsion: $\mathrm{SmallGroup}(32,49)$,
whose $\BG$ has invariant factors $[2,2,2,2,2,2,4,4,4,4,8]$, still has conductor $\{2\}$, so
the Smith spectrum of $\BG$ is strictly finer than the conductor it determines. Classifying
that spectrum, and not merely its radical, is the general program
(Section~\ref{sec:open}). This places the commuting probe in the tradition of incidence-matrix
Smith forms \cite{Sin,Stanley} and of the program computing Smith and critical groups of
structured graph families, among them Grassmann graphs \cite{DuceySin} and Paley graphs
\cite{CSX}, in which we have not found prior treatment of the commuting graph of a finite
group.

The blow-up mechanism of Lemma~\ref{lem:dup} yields a structural theorem: the entire
Smith spectrum is an invariant of the \emph{isoclinism class}.

\begin{theorem}[Isoclinism invariance]\label{thm:isoclinism}
The nontrivial Smith invariant factors of the commuting difference lattice, and hence
$\Ccomm(G)$, depend only on the isoclinism class of $G$.
\end{theorem}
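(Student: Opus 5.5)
The plan is to show that the commuting matrix of $G$ is a blow-up, by duplication of rows and columns, of a matrix defined on $G/Z(G)$ purely from the commutator pairing. Two isoclinic groups then give permutation-equivalent core matrices, and Lemma~\ref{lem:dup} finishes. Recall that $G$ and $H$ are isoclinic if there are isomorphisms $\alpha\colon G/Z(G)\to H/Z(H)$ and $\beta\colon G'\to H'$ with $\beta([x,y])=[x',y']$ whenever $x'Z(H)=\alpha(xZ(G))$ and $y'Z(H)=\alpha(yZ(G))$. In particular $[x,y]$ depends only on the cosets $xZ(G)$ and $yZ(G)$.

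\emph{Step 1: core matrix.} For $u,v\in G/Z(G)$ set $K_G(u,v)=\mathbf 1_{[x,y]=e}$, where $x,y$ are any lifts of $u,v$. This is well defined because the commutator is constant on cosets of the centre. The full commuting matrix $P$ of $G$ has entry $P_{g,h}=K_G(gZ,hZ)$. So $P$ is obtained from $K_G$ by repeating each row and each column exactly $|Z(G)|$ times.

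\emph{Step 2: passing to differences.} The difference lattice $M(G)$ is spanned by the vectors $P_g-P_e$ for $g\in G$, since $P_r-P_{r'}=(P_r-P_e)-(P_{r'}-P_e)$. Its nontrivial Smith invariant factors are therefore those of the matrix $D$ with rows $P_g-P_e$. Indeed, nontrivial invariant factors of a lattice can be read off from any generating matrix, because appending integer combinations of existing rows does not change the determinantal divisors. The row $P_g-P_e$ depends only on $gZ$, and it is the row $K_G(gZ,\cdot)-K_G(Z,\cdot)$ with each column repeated $|Z(G)|$ times. Hence $D$ is a row-and-column duplication of the core difference matrix $E_G$, whose rows are $K_G(u,\cdot)-K_G(1,\cdot)$ for $u\in G/Z(G)$. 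By Lemma~\ref{lem:dup}, $D$ and $E_G$ have the same nonzero $\Delta_k$ and hence the same nontrivial invariant factors. This is also what makes the comparison across groups with different $|Z|$ legitimate: the centre sizes disappear entirely.

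\emph{Step 3: transport along the isoclinism.} Since $\beta$ is injective, $[x,y]=e$ in $G$ if and only if $[x',y']=e$ in $H$ for the corresponding lifts. Thus $K_H(\alpha u,\alpha v)=K_G(u,v)$, and because $\alpha$ fixes the identity coset, $E_H$ is $E_G$ with rows and columns permuted by $\alpha$. Permutation matrices are unimodular, so $E_G$ and $E_H$ have identical Smith forms. Combining with Step 2, the commuting difference lattices of $G$ and $H$ have the same nontrivial Smith factors. Theorem~\ref{thm:instrument} then gives $\Ccomm(G)=\Ccomm(H)$.

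The only delicate point is Step 2. One must work with a generating matrix of the \emph{difference} lattice, rather than the row lattice, that is itself an exact duplication of a core matrix. Choosing the base row $P_e$, which is constant in the central coset, is what makes the duplication exact. Everything else is bookkeeping: well-definedness of $K_G$ on cosets, and the fact that injectivity of $\beta$ carries the condition "commutator trivial" across.
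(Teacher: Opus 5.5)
Your proposal is correct and follows essentially the same route as the paper. Both arguments realize the commuting matrix as a $|Z(G)|$-fold duplication of a coset-level matrix on $G/Z(G)$, apply Lemma~\ref{lem:dup}, and use the isoclinism to identify the coset-level matrices of $G$ and $H$ up to a row and column permutation; your Step~2, which exhibits the generating matrix with rows $P_g-P_e$ as an exact duplication of the core difference matrix, spells out a point the paper's proof states in one line.
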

\begin{proof}
For central $z,w$ one has $[gz,hw]=[g,h]$, so the commuting relation $[g,h]=e$ depends only
on the cosets $gZ(G),hZ(G)$: the commuting matrix of $G$ is the matrix $\bar P$ on
$G/Z(G)\times G/Z(G)$, defined by $\bar P(\bar g,\bar h)=\mathbf 1_{[g,h]=e}$ for any
representatives, with every row and column duplicated $|Z(G)|$ times. We emphasize that
$\bar P$ is the zero-fiber indicator of the commutator \emph{pairing}
$G/Z\times G/Z\to G'$, not the commuting relation of the quotient group $G/Z$: cosets may
commute in $G/Z$ while $[g,h]\ne e$ in $G$. By
Lemma~\ref{lem:dup}, the difference lattices of $P$ and $\bar P$ have the same nontrivial
invariant factors. An isoclinism $(G\to H)$ is a pair of isomorphisms
$G/Z(G)\to H/Z(H)$ and $G'\to H'$ compatible with the commutator pairings; it carries the
relation $\{(\bar g,\bar h):[g,h]=e\}$ to the corresponding relation for $H$, so $\bar P_G$
and $\bar P_H$ agree up to row and column permutation and have identical Smith forms.
\end{proof}

\begin{remark}
Theorem~\ref{thm:isoclinism} is verified computationally: the two non-isomorphic extraspecial
groups of order $27$ (exponents $3$ and $9$) both have Smith spectrum $[3]$, as the CA theorem
independently predicts ($b=p+1=4$ maximal abelian subgroups, $\rad(b-1)=\{3\}$), and
$D_8,Q_8$ both have spectrum $[2]$. It also explains why
$\mathrm{SmallGroup}(64,k)$ for $k=73,\dots,82$ share one centralizer profile and one
spectrum: they form an isoclinism family. And it delimits the probe's resolution: the
commuting conductor can never distinguish isoclinic groups: it is an arithmetic invariant
of the commutator pairing $G/Z\times G/Z\to G'$, not of $G$ itself.
\end{remark}

The blow-up viewpoint also settles direct products completely.

\begin{lemma}[Block splitting]\label{lem:block}
Let $\Z^N=\bigoplus_i V_i$ be the decomposition attached to a partition of a $\Z$-basis into
blocks, and let $L=\bigoplus_i L_i$ with each $L_i\subseteq V_i$ a sublattice. Then
$\Z^N/L\cong\bigoplus_i V_i/L_i$, and $L$ admits a diagonal presentation whose nontrivial
entries form the multiset union of the blocks' nontrivial invariant factors, each computed
inside its own block. The canonical invariant factors of $L$ are obtained from this multiset
by the usual prime-wise regrouping of a diagonal presentation; in particular the torsion
group, the elementary divisors, and the prime support of $L$ are the unions of those of the
blocks, while the union multiset itself need not be a divisibility chain.
\end{lemma}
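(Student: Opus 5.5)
The plan is to reduce everything to the Smith normal form inside each block and then assemble the blocks. Write $N_i=\operatorname{rank}V_i$. Because $L_i\subseteq V_i$ is a sublattice of a free module of finite rank, the Smith normal form theorem gives two things: a basis $f^{(i)}_1,\dots,f^{(i)}_{N_i}$ of $V_i$, and integers $d^{(i)}_1\mid d^{(i)}_2\mid\cdots\mid d^{(i)}_{r_i}$, such that $L_i$ has basis $d^{(i)}_1f^{(i)}_1,\dots,d^{(i)}_{r_i}f^{(i)}_{r_i}$. Its nontrivial invariant factors are the $d^{(i)}_k\neq1$.

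The bases $f^{(i)}$ together form a basis of $\Z^N=\bigoplus_i V_i$, because the basis change is block diagonal with unimodular blocks. In this basis, $L=\bigoplus_iL_i$ has basis $\{d^{(i)}_kf^{(i)}_k\}$. That is a diagonal presentation of $L$ whose diagonal entries are the concatenation of the blocks' lists, and whose nontrivial entries are the multiset union asserted in the statement. The quotient isomorphism follows directly from this. The map $\Z^N\to\bigoplus_iV_i/L_i$, sending $(v_i)_i$ to $(v_i+L_i)_i$, is surjective, and its kernel is exactly $\bigoplus_iL_i=L$. Equivalently, we can read the quotient off the diagonal presentation as $\bigoplus_{i,k}\Z/d^{(i)}_k\oplus\Z^{\sum_i(N_i-r_i)}$.

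For the canonical invariant factors, we use the standard fact that a diagonal matrix $\operatorname{diag}(a_1,\dots,a_r)$ has elementary divisors equal to the multiset union of the prime-power factors of the $a_j$. The invariant factors are then recovered by the prime-wise regrouping: for each prime, sort its exponents in decreasing order, and let the $k$-th largest invariant factor collect the $k$-th largest power of each prime. This follows from the structure theorem applied to $\bigoplus_j\Z/a_j\cong\bigoplus_{j,p}\Z/p^{v_p(a_j)}$, together with the uniqueness of invariant factors. It shows the following:
\begin{enumerate}[label=\textup{(\roman*)},leftmargin=2.2em,itemsep=2pt]
\item the torsion subgroup of $\Z^N/L$ is the direct sum of the blocks' torsion subgroups;
\item the elementary divisors of $L$ are the union of those of the blocks;
\item the prime support of $L$, meaning the primes dividing some invariant factor, is the union of the blocks' supports.
\end{enumerate}
The same union statement also follows via Theorem~\ref{thm:instrument}, since for a block-diagonal matrix the mod-$p$ rank is additive.

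To show that the union multiset need not form a divisibility chain, one example suffices: take blocks with lists $[2]$ and $[3]$. Their union $\{2,3\}$ is not a chain, and the canonical form of $\operatorname{diag}(2,3)$ is $\operatorname{diag}(1,6)$.

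None of these steps is a real obstacle. The only point needing care is the regrouping step. One should state explicitly that invariant factors are unique while diagonal presentations are not. The claim is therefore about the existence of some diagonal presentation, and canonical data are recovered only after the regrouping.
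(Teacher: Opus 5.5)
Your proposal is correct and follows the paper's proof. Both bring each block to Smith form independently: you use an adapted basis of each $V_i$, and the paper uses unimodular row and column operations confined to each block of the stacked block-diagonal generator matrix. Both then read off the diagonal presentation and the quotient blockwise, and recover the canonical invariant factors by the standard elementary-divisor regrouping. Your explicit example $\operatorname{diag}(2,3)\sim\operatorname{diag}(1,6)$ for the non-chain clause is a small addition the paper's proof leaves implicit.
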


\begin{proof}
Stacking generator matrices of the $L_i$, written in the chosen basis, gives a
block-diagonal integer matrix. Unimodular row and column operations confined to one block
leave the other blocks untouched, so each block may be brought to its own Smith form
independently, yielding a diagonal presentation of $L$ whose entries are exactly the
blocks' invariant factors. The quotient statement follows by reading the presentation off
blockwise; the regrouping statement is the standard passage from an arbitrary diagonal
presentation to the invariant-factor chain via elementary divisors.
\end{proof}

In the next theorem the lists $d_1,\dots,d_r$ and $e_1,\dots,e_s$ are the \emph{full}
invariant factor lists, \emph{including} the factors equal to $1$ up to the rational rank;
the Smith spectrum elsewhere in the paper suppresses these trivial factors. The trivial
factors matter here: they generate the multiplicities in
Corollary~\ref{cor:CAproduct}.

\begin{theorem}[Product spectrum]\label{thm:product}
Let $G,H$ be finite groups, let $r_G$ and $r_H$ be the ranks over $\Q$ of their commuting
difference lattices, and let $d_1\mid\cdots\mid d_{r_G}$ and $e_1\mid\cdots\mid e_{r_H}$
be the full invariant factor lists, \emph{including} any factors equal to $1$. The
commuting difference lattice of $G\times H$ admits a diagonal presentation with entries
\[
d_i\ \ (1\le i\le r_G),\qquad e_j\ \ (1\le j\le r_H),\qquad
d_ie_j\ \ (1\le i\le r_G,\ 1\le j\le r_H),
\]
whose nontrivial entries form the multiset
\[
\{d_i:d_i>1\}\ \sqcup\ \{e_j:e_j>1\}\ \sqcup\ \{d_ie_j:d_ie_j>1\}.
\]
Consequently the cokernel torsion, hence the Smith spectrum, of $G\times H$ is obtained
from this multiset by the canonical regrouping of a diagonal matrix into invariant factors
(prime-wise collection of elementary divisors), and in particular
$\Ccomm(G\times H)=\Ccomm(G)\cup\Ccomm(H)$.
\end{theorem}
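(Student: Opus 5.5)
The plan is to realize the commuting matrix of $G\times H$ as a Kronecker product and then transport Smith forms through that product. Since $[(g,h),(g',h')]=e$ if and only if $[g,g']=e$ and $[h,h']=e$, the commuting matrix satisfies $P_{G\times H}=P_G\otimes P_H$, with rows and columns indexed by pairs. So everything reduces to comparing the row and difference lattices of a Kronecker product with those of its factors.

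\emph{Step 1: splitting off the all-ones row.} For a single group $G$, fix a central element $z$. Every row of $P_G$ has entry $1$ in column $z$, and the row of $z$ itself is $\mathbf 1$. The difference lattice $D_G$ lies in the hyperplane where the $z$-coordinate vanishes. The full row lattice is $R_G=D_G+\Z\mathbf 1$, and this sum is direct, because every element of $D_G$ has $z$-coordinate $0$ while $\mathbf 1$ does not.

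Take a generator matrix whose first row is $\mathbf 1$ and whose remaining rows generate $D_G$. I would use column operations that add multiples of column $z$ to the other columns so as to clear the first row outside column $z$. These operations do not disturb the $D_G$ rows, since those rows are $0$ in column $z$. The result is block diagonal, $(1)\oplus(\text{generator of }D_G\text{ on the other columns})$. Hence $R_G$ has full invariant factor list $(1,d_1,\dots,d_{r_G})$, counting the $1$'s up to rank, and its Smith form is $U_G A_G V_G$ with $A_G$ diagonal of that shape. This is the splitting promised in Remark~\ref{rem:whydiff}. By Lemma~\ref{lem:dup}, it is harmless to work with the full $|G|\times|G|$ matrix or with a row basis.

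\emph{Step 2: Kronecker products of Smith forms.} The row lattice of $P_G\otimes P_H$ is spanned by the tensors $\rho\otimes\sigma$ of rows, so it equals $R_G\otimes R_H\subseteq\Z^{G}\otimes\Z^{H}$. If $M_G=U_G A_G V_G$ and $M_H=U_H A_H V_H$ are generator matrices, then
\[
M_G\otimes M_H=(U_G\otimes U_H)(A_G\otimes A_H)(V_G\otimes V_H).
\]
Kronecker products of unimodular matrices are unimodular, so $R_{G\times H}$ has the diagonal presentation $A_G\otimes A_H$. Its entries are $1\cdot1$, $d_i\cdot 1$, $1\cdot e_j$ and $d_ie_j$.

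To pass to the difference lattice, I would apply Step 1 to $G\times H$ itself, using the central element $(z,w)$. This shows $R_{G\times H}=D_{G\times H}\oplus\Z(\mathbf 1\otimes\mathbf 1)$, and that this summand contributes exactly one invariant factor equal to $1$. Removing that single $1$ leaves the diagonal presentation of $D_{G\times H}$ with entries $d_i$, $e_j$, $d_ie_j$, as claimed. The regrouping into a divisibility chain is then the standard elementary-divisor bookkeeping of Lemma~\ref{lem:block}.

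\emph{Step 3: the conductor.} A prime divides some diagonal entry if and only if it divides some $d_i$ or some $e_j$, because $p\mid d_ie_j$ forces $p\mid d_i$ or $p\mid e_j$. Theorem~\ref{thm:instrument} then gives $\Ccomm(G\times H)=\Ccomm(G)\cup\Ccomm(H)$.

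The main obstacle is Step 2's passage from row lattice to difference lattice. One must check that stripping one unit factor from a diagonal presentation of $R_{G\times H}$ genuinely yields a presentation of $D_{G\times H}$, rather than merely matching the cokernel torsion. I would handle this by comparing cokernels directly: the Smith form of $R_{G\times H}$ is $(1)\oplus\mathrm{Smith}(D_{G\times H})$ by Step 1, and it also equals the canonical regrouping of $A_G\otimes A_H$. Uniqueness of Smith form, applied to nontrivial factors, then identifies the spectra without needing to track explicit bases.
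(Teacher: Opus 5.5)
Your proof is correct, but it reaches $D_{G\times H}$ by a different route from the paper's.

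The paper works with the product difference lattice from the start. It identifies that lattice, by explicit generator manipulations, as $D_G\otimes R_H+R_G\otimes D_H$. It then uses the central-column splittings to choose Smith bases $h_i\in\ker c_G$ and $k_j\in\ker c_H$. In the product basis $\{\mathbf 1_G,h_i\}\otimes\{\mathbf 1_H,k_j\}$ the lattice is literally $(D_G\otimes\mathbf 1)\oplus(\mathbf 1\otimes D_H)\oplus(D_G\otimes D_H)$, diagonal with entries $d_i$, $e_j$, $d_ie_j$.

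You instead diagonalize the row lattice $R_G\otimes R_H$ through the Kronecker factorization $(U_G\otimes U_H)(A_G\otimes A_H)(V_G\otimes V_H)$. You rerun your Step~1 on $G\times H$ at the central coordinate $(z,w)$ to get $\mathrm{Smith}(R_{G\times H})=(1)\oplus\mathrm{Smith}(D_{G\times H})$, and finish by uniqueness of Smith form. This is the alternative the paper only sketches in Remark~\ref{rem:tensor}.

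Each route buys something different. Yours is shorter and runs on the full commuting matrices, so you never need that centralizer types multiply. It also avoids the overlapping-sum identity. The paper's route gives an explicit basis in which $D_{G\times H}$ is diagonal, with no appeal to uniqueness.

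The concern you raise at the end is not a real obstruction. A sublattice $L\subseteq\Z^N$ of rank $m$ whose cokernel torsion matches that of $\operatorname{diag}(a_1,\dots,a_m)$ admits a basis $f_1,\dots,f_N$ of $\Z^N$ with $L=\bigoplus_k\Z a_kf_k$. So matching spectra already gives a genuine diagonal presentation.

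What is load-bearing, and you use it correctly, is that $D_{G\times H}$ is the specific complement $R_{G\times H}\cap\ker c$ cut out by the central-coordinate functional. An arbitrary complement of $\Z(\mathbf 1\otimes\mathbf 1)$ in $R_{G\times H}$ can have different cokernel torsion. For example, replace one Smith generator $d\,u$, with $d>1$ and $u$ in the central hyperplane, by $d\,u+\mathbf 1\otimes\mathbf 1$. This still gives a complement, but one in which the factor $d$ disappears. That is why the column reduction of your Step~1, like the paper's choice $h_i\in\ker c$, cannot be skipped.
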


\begin{proof}
Since $C_{G\times H}\bigl((g,h)\bigr)=C_G(g)\times C_H(h)$, centralizer types multiply and
the compressed commuting matrix of $G\times H$ is the Kronecker product $B_G\otimes B_H$; by
Lemma~\ref{lem:dup} we may work with compressed matrices throughout. Write $p_i$ ($q_j$) for
the rows of $B_G$ ($B_H$), with $p_0,q_0$ the central-type all-ones rows; write $D$ for
difference lattices and $R$ for row lattices. From
\[
p_i\otimes q_j-p_0\otimes q_0=(p_i-p_0)\otimes q_j+p_0\otimes(q_j-q_0),
\]
every generator of the product difference lattice lies in $D_G\otimes R_H+R_G\otimes D_H$.
Conversely,
$(p_i\otimes q_j-p_0\otimes q_0)-(p_0\otimes q_j-p_0\otimes q_0)=(p_i-p_0)\otimes q_j$
exhibits the generators of $D_G\otimes R_H$ inside the product difference lattice, and
$p_i\otimes(q_j-q_0)=p_i\otimes q_j-p_i\otimes q_0$, itself a difference of two rows of
$B_G\otimes B_H$, does the same for $R_G\otimes D_H$. Hence
the product difference lattice is $L=D_G\otimes R_H+R_G\otimes D_H$.

Every row of $B_G$ has entry $1$ in the central-type \emph{column} (every representative
commutes with a central one), so the coordinate functional $c(v)=v_{\mathrm{cent}}$
satisfies $c\equiv0$ on $D_G$ and $c(\mathbf 1)=1$. This gives two explicit integral
splittings at once: $R_G=D_G\oplus\Z\mathbf 1$ (if $n\mathbf 1\in D_G$ then
$n=c(n\mathbf 1)=0$), and $\Z^{T_G}=\Z\mathbf 1\oplus\ker c$ via
$v=c(v)\mathbf 1+(v-c(v)\mathbf 1)$, where $\ker c$ is the coordinate hyperplane
$\{v:v_{\mathrm{cent}}=0\}$, which contains $D_G$. Choose a Smith basis
$h_1,\dots,h_{T_G-1}$ of $\ker c$ adapted to $D_G$, so
$D_G=\bigoplus_{i\le r_G}\Z\,d_ih_i$ with $i$ running to $r_G=\rk_\Q D_G\le T_G-1$, and
likewise $k_j$, $e_j$, $r_H$ for $H$; then $\{\mathbf 1_G,h_i\}\otimes\{\mathbf 1_H,k_j\}$ is a
$\Z$-basis of $\Z^{T_GT_H}$. In this basis,
\[
L=(D_G\otimes\mathbf 1)\ \oplus\ (\mathbf 1\otimes D_H)\ \oplus\ (D_G\otimes D_H),
\]
an internal direct sum with each summand supported on its own block of the product basis.
By Lemma~\ref{lem:block} the torsion of $L$ is the union of the three blocks' torsion, and
in these coordinates each block is already diagonal, with entry $d_i$ on the basis vector
$h_i\otimes\mathbf 1_H$, $e_j$ on $\mathbf 1_G\otimes k_j$, and $d_ie_j$ on
$h_i\otimes k_j$. The nontrivial diagonal entries are therefore
$\{d_i\}\sqcup\{e_j\}\sqcup\{d_ie_j\}$ as claimed; the diagonal presentation determines the
invariant factors by the usual prime-wise regrouping, and its prime support is
$\Ccomm(G)\cup\Ccomm(H)$, which regrouping preserves.
\end{proof}

\begin{remark}[Relation to the tensor law]\label{rem:tensor}
For row lattices the product law is transparent: $R_{G\times H}=R_G\otimes R_H$, and a
Smith-basis argument shows that the invariant factors of a tensor product of lattices are
the regrouped pairwise products of those of the factors. Theorem~\ref{thm:product} says
the \emph{difference} lattice obeys a closed form as well, and this is not automatic:
$D_{G\times H}=D_G\otimes R_H+R_G\otimes D_H$ is a sum of overlapping sublattices rather
than a tensor product, and the mixed summands are exactly what contribute the terms
$\{d_i\}\sqcup\{e_j\}$. The integral splitting $R=D\oplus\Z\mathbf 1$ via the
central-column functional is what makes the sum direct; equivalently,
$R_{G\times H}=(D_G\oplus\Z\mathbf 1)\otimes(D_H\oplus\Z\mathbf 1)$ and $D_{G\times H}$
is the complement of $\Z(\mathbf 1\otimes\mathbf 1)$, which re-derives the multiset of
Theorem~\ref{thm:product}.
\end{remark}

\begin{corollary}[CA products; abelian factors]\label{cor:CAproduct}
If $G,H$ are nonabelian CA-groups with $b_G,b_H$ maximal abelian subgroups and $m=b-1$, the diagonal
presentation of $G\times H$ has nontrivial entries $m_G$ with multiplicity $b_H$, $m_H$ with
multiplicity $b_G$, and $m_Gm_H$ once; hence the product of the nontrivial invariant factors
is $m_G^{\,b_H+1}m_H^{\,b_G+1}$. If $A$ is abelian then $D_A=0$ and $G\times A$ has the same
spectrum as $G$.
\end{corollary}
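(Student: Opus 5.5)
The corollary follows from Theorem~\ref{thm:product} once we write down the \emph{full} invariant factor lists of the two factors, including the trivial entries. For a nonabelian CA-group $G$, the proof of Theorem~\ref{thm:CA} shows that the commuting difference lattice has rational rank $r_G=b_G$. Its full list is
\[
(d_1,\dots,d_{b_G})=(1,\dots,1,m_G),
\]
that is, $b_G-1$ ones followed by $m_G=b_G-1$. Indeed, the compressed difference lattice is spanned by the $b_G$ rows of $-(J_{b_G}-I_{b_G})$, padded by a zero central column, and that matrix has Smith form $\operatorname{diag}(1,\dots,1,b_G-1)$. The same holds for $H$, with $r_H=b_H$.

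Next we substitute into the three families of Theorem~\ref{thm:product}.
\begin{itemize}
\item The entries $d_i$ contribute $m_G$ exactly once.
\item The entries $e_j$ contribute $m_H$ exactly once.
\item The products $d_ie_j$ split into four cases. The pairs $(1,1)$ give only trivial entries. The pairs $(d_{b_G},e_j)$ with $j<b_H$ give $m_G$, $b_H-1$ times. The pairs $(d_i,e_{b_H})$ with $i<b_G$ give $m_H$, $b_G-1$ times. The single pair $(d_{b_G},e_{b_H})$ gives $m_Gm_H$ once.
\end{itemize}
Adding up, $m_G$ appears $1+(b_H-1)=b_H$ times, $m_H$ appears $b_G$ times, and $m_Gm_H$ appears once. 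By Corollary~\ref{cor:bge3}, $m_G,m_H\ge2$, so all of these entries really are nontrivial.

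For the product formula, recall that the product of the nontrivial invariant factors equals the order of the torsion of the cokernel. Regrouping a diagonal presentation preserves this order, so we can read it straight off the diagonal presentation:
\[
m_G^{\,b_H}\,m_H^{\,b_G}\,(m_Gm_H)=m_G^{\,b_H+1}m_H^{\,b_G+1}.
\]

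For the abelian case, every row of the commuting matrix of an abelian group $A$ is all ones, so $D_A=0$ and $r_A=0$. The $e_j$ list is then empty, and so is the $d_ie_j$ family. Theorem~\ref{thm:product} leaves only the entries $d_i$, so $G\times A$ has exactly the spectrum of $G$. The same conclusion can be seen directly: the product difference lattice is $L=D_G\otimes R_A+R_G\otimes 0=D_G\otimes\Z\mathbf 1$.

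The only step that needs care is the bookkeeping of the trivial factors. The multiplicities come entirely from pairing the single nontrivial factor of one group with the $b-1$ trivial factors of the other. One has to use the full lists with the correct rank $r=b$, not $b+1$ and not $b-1$, which is why the rank computation from Theorem~\ref{thm:CA} is stated first.
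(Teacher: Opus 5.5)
Your proposal is correct and is exactly the derivation the paper intends (it states the corollary without a separate proof, as a direct consequence of Theorem~\ref{thm:product}): substitute the full lists $(1,\dots,1,b-1)$ of length $r=b$ from Theorem~\ref{thm:CA} into the product law and count. Your bookkeeping of the trivial factors, your use of Corollary~\ref{cor:bge3} to confirm the entries are nontrivial, and your treatment of the abelian factor via $D_A=0$ all match what the statement requires.
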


\begin{remark}
Theorem~\ref{thm:product} is machine-verified as an equality of torsion groups on the
products $S_3\times S_3$, $S_3\times D_8$, $S_3\times Q_8$, $D_8\times Q_8$,
$S_3\times D_{10}$, $S_4\times S_3$, and $S_4\times D_8$, in two independent pipelines. The
predicted multiset need not itself be a divisibility chain: for $S_3\times S_3$ and
$D_8\times Q_8$ it is one and matches the invariant factors literally
($[3^8,9]$ and $[2^6,4]$), while for $S_3\times D_8$ the predicted entries
$\{2,2,2,2,3,3,3,6\}$ regroup to the invariant factors $(2,6,6,6,6)$. Consistent with
Theorem~\ref{thm:isoclinism}, $S_3\times D_8$ and $S_3\times Q_8$ have identical spectra:
products of isoclinic groups are isoclinic.
\end{remark}

\section{The commuting landscape: tracking the derived subgroup}\label{sec:commuting}

With the structural theorems in place, this section maps the empirical landscape:
exhaustively over all nonabelian groups of order $\le128$, what does the commuting
conductor track, and where does it deviate?

\begin{proposition}[Generic law; computed, exhaustive]\label{prop:comm}
For the $3{,}349$ nonabelian groups of order $\le128$, $\Ccomm(G)=\rad(|G'|)$ holds for
$3{,}198$ ($95.5\%$), and for all $1{,}277$ groups in the diagnostic family
$\mathcal P=\{G:|\Cent(G)|=|G'|+2\}$ with zero exceptions, where
$\Cent(G)=\{C_G(g):g\in G\}$ denotes the set of distinct element-centralizers of $G$.
\end{proposition}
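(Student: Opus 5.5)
Since this statement is a computed claim rather than a structural theorem, I would establish it by an exhaustive, exact computation over the SmallGroups library, organized so that every step reduces to results already proved in the paper. First, I would enumerate all groups of order $n\le128$ from the library and discard the abelian ones. This is a necessary sanity check, because the count of nonabelian groups must come out to exactly $3{,}349$. For each remaining $G$ I would compute the set $\Cent(G)$ of distinct element-centralizers, which is the same thing as the set of centralizer types of Definition~\ref{def:Bg}. From these I would build the compressed matrix $\BG$ with its multiplicity vector, by testing $[x_i,x_j]=e$ on one representative per type. By Proposition~\ref{prop:Bg}, itself a consequence of Lemma~\ref{lem:dup}, the nontrivial Smith invariant factors of the commuting difference lattice coincide with those of the row-difference lattice of $\BG$. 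So no $|G|\times|G|$ matrix is ever formed. The largest cases have a few dozen types rather than $128$ columns.

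Second, for each $\BG$ I would form the generator matrix of its row-difference lattice, using rows $\rho_i-\rho_1$ against the central all-ones row $\rho_1$. I would then compute its Smith normal form over $\Z$ in exact integer arithmetic. By Theorem~\ref{thm:instrument}, $\Ccomm(G)$ is the set of primes dividing the product of the invariant factors up to the rational rank. Separately I would compute $|G'|$ and its radical, and record whether $\Ccomm(G)=\rad(|G'|)$. Tallying the successes should give $3{,}198$. I would also test membership in $\mathcal P$ via the condition $|\Cent(G)|=|G'|+2$, and check that every one of the $1{,}277$ members of $\mathcal P$ satisfies the equality.

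Third, I would harden the computation with independent checks. I would run two pipelines, GAP's \texttt{SmithNormalFormIntegerMat} and an independent Python/exact-rational implementation, and require identical spectra. I would also confirm that the CA cases reproduce the value $b-1$ of Theorem~\ref{thm:CA}. As a further consistency test, Theorem~\ref{thm:isoclinism} predicts that the spectrum is constant on isoclinism families, so within each family (for example $\mathrm{SmallGroup}(64,k)$, $k=73,\dots,82$) the outputs must agree; any disagreement would flag a bug. The $151$ exceptions should be listed explicitly, such as $\mathrm{SmallGroup}(64,73)$ with conductor $\{2,3\}$ against $\rad(|G'|)=\{2\}$, so the count is auditable.

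I expect the main obstacle to be scale and exactness at order $128$. There are $2{,}328$ groups of that order, and some have many centralizer types, so Smith form computation must avoid coefficient blow-up. To address this, I would first try modular determinantal-divisor methods: compute the rank over $\Q$, then the mod-$p$ rank only for primes dividing a single nonzero maximal minor. Full Smith forms would be reserved for the spectrum tables. A subsidiary risk is miscounting the family $\mathcal P$. I would guard against this by computing $|\Cent(G)|$ both as the number of types and directly as the number of distinct subgroups $C_G(g)$.
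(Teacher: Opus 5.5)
Your proposal is correct and follows the paper's own route. The statement is certified by an exact, exhaustive computation over the SmallGroups library in GAP, cross-checked independently in Python. That computation runs on the compressed centralizer-type matrix $\BG$, whose losslessness is Lemma~\ref{lem:dup} via Proposition~\ref{prop:Bg}, and reads off conductor membership through Theorem~\ref{thm:instrument}. Your added consistency checks are compatible with the paper's data: the CA values $b-1$, constancy of the spectrum on isoclinism families, and the $3{,}349-3{,}198=151$ listed exceptions including $\mathrm{SmallGroup}(64,73)$.
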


The counting invariant $|\Cent(G)|$ has its own literature: the study of
$n$-centralizer groups goes back to Belcastro and Sherman \cite{BelSher} and was
developed by Ashrafi and others \cite{Ashrafi}; the family $\mathcal P$ ties that
counting invariant to $|G'|$.

\begin{proposition}[$p^2$-protection; computed, exhaustive for $|G|\le128$]\label{prop:p2}
For all $3{,}349$ groups of order $\le128$, $p^2\mid|G'|\Rightarrow p\in\Ccomm(G)$, with zero
violations.
\end{proposition}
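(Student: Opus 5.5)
This statement is computational rather than structural, so my plan is to certify it by exact, exhaustive computation. The reductions proved earlier keep the computation small and the arithmetic exact. I will iterate over the $3{,}349$ nonabelian groups of order $\le128$ in the SmallGroups library and compute $|G'|$ in GAP. For each prime $p$ with $p^2\mid|G'|$, the task is to decide whether $p\in\Ccomm(G)$. By Theorem~\ref{thm:instrument} this is equivalent to $\rk_{\F_p}<\rk_\Q$ for the commuting difference lattice. So full Smith forms are not needed; the plan only requires two ranks per prime.

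The first step is to replace the $|G|\times|G|$ commuting matrix by $\BG$. By Proposition~\ref{prop:Bg} and Lemma~\ref{lem:dup} this loses nothing. I will compute the set $\Cent(G)$ of distinct centralizers, choose representatives, and form the symmetric $0$--$1$ matrix $\BG$. I then take the row-difference matrix whose rows are $\rho_i-\rho_1$, with $\rho_1=\mathbf 1$ the central row. Its row span is the whole difference lattice, since every difference $\rho_i-\rho_j$ is a difference of two such rows.

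Several earlier results will shrink the work and serve as cross-checks. By Theorem~\ref{thm:isoclinism}, it suffices to treat one representative per isoclinism class. Alternatively, I can verify that all members of a class give the same answer, which is a useful consistency check. By Theorem~\ref{thm:CA}, CA-groups are handled in closed form: for them I check directly that $p^2\mid|G'|$ forces $p\mid b-1$. By Theorem~\ref{thm:product} and Corollary~\ref{cor:CAproduct}, direct factors that are abelian can be discarded. Direct products of two nonabelian groups reduce to their factors, because $\Ccomm(G\times H)=\Ccomm(G)\cup\Ccomm(H)$. One caveat is that $|(G\times H)'|=|G'||H'|$, so the hypothesis $p^2\mid|G'|$ may hold for the product without holding for either factor. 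Such products therefore still need their own check, which costs nothing since the factor conductors are already known.

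The main obstacle is certifying the rational rank exactly; the mod-$p$ rank for small $p$ is straightforward Gaussian elimination over $\F_p$. My plan is fraction-free (Bareiss) elimination over $\Z$, which is feasible because the matrices have at most a few dozen types. As an independent check, I will also compute the rank modulo several large primes. Those ranks are lower bounds for $\rk_\Q$ that equal it except at the finitely many primes dividing $\Delta_r$, and they agree with the Bareiss rank. With $r=\rk_\Q$ certified, I compute $\rk_{\F_p}$ for each relevant $p$. For those $p$, a violation would mean $\rk_{\F_p}=r$. I will record the number of violations, expecting zero, and reproduce the whole run in a second independent pipeline (Python with exact integer arithmetic).
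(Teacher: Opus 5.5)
Your plan is correct and follows the paper's route. The statement is certified by exact, exhaustive computation over the SmallGroups library on the compressed matrix $\BG$ (lossless by Lemma~\ref{lem:dup}), in GAP with an independent Python cross-check, and comparing $\rk_{\F_p}$ with $\rk_\Q$ is equivalent to the paper's Smith-form computation. The structural shortcuts you add are valid but optional: isoclinism preserves $|G'|$ as well as the spectrum, CA-groups have a closed form, and abelian direct factors can be discarded.
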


\begin{remark}[Base rates]\label{rem:baserate}
The census is dominated by $2$-groups: of the $3{,}349$ nonabelian groups of order
$\le128$, exactly $2{,}624$ ($78.3\%$) are $2$-groups \cite{BEO}, for which the target
$\rad(|G'|)$ is the single prime $\{2\}$:
\[
\begin{array}{lcccccc}
\text{order} & 8 & 16 & 32 & 64 & 128 & \text{total}\\\hline
\text{nonabelian $2$-groups} & 2 & 9 & 44 & 256 & 2{,}313 & 2{,}624
\end{array}
\]
The remaining $725$ nonabelian groups ($21.7\%$) have mixed or odd order. The headline
rate of
Proposition~\ref{prop:comm} should be read against that base rate: the drop phenomena
live in the mixed-order stratum, while the $2$-groups carry the added-prime phenomenon,
Corollary~\ref{cor:nondivisor} occurring at order $64$. Stratified counts are immediate
from the per-group certificates of Section~\ref{sec:repro}.
\end{remark}

The commuting conductor does not recover the derived subgroup, its order, or its exponent; it
recovers, generically, the prime support $\rad(|G'|)$. The deviations from
Proposition~\ref{prop:comm} fall, empirically, into three families in this range: for
solvable $G'$ the observed drops are of first-power primes and correlate with the Sylow
direct-factor structure of $G'$, though a clean classification of exactly when a
first-power prime drops remains open; for the nonsolvable $G'$ in range ($A_5$ and
$\mathrm{PSL}(2,7)$) the prime $3$ drops; and the remaining deviations are \emph{added} primes,
explained completely on CA-groups by Theorem~\ref{thm:CA}. The family $\mathcal P$ is a
computationally discovered diagnostic family and theorem target, not a structural pillar; the
structurally explained result is Theorem~\ref{thm:CA}.

\section{The deformation family}\label{sec:deform}

This section asks which commuting-conductor phenomena are stable when the game itself is
deformed. The commuting game admits a natural deformation space: a \emph{commutator
weight} is a function $w\colon G'\to\Z_{\ge0}$ with $w(e)=0$, defining the payoff
$P_w[g,h]=w([g,h])$; weights constant on the $G$-conjugacy classes contained in $G'$ give
conjugation-invariant payoffs, and the constant weight $w\equiv1$ off the identity
recovers the commuting game. These are initial deformation experiments on ten groups, not
an exhaustive scan; the conjecture below should be read with that sample size in mind.
The ten test groups are $D_{10}$, $D_8$, $Q_8$, $\mathrm{SL}(2,3)$,
$\mathrm{SmallGroup}(16,3)$, both extraspecial groups of order $27$, $S_4$, $A_5$, and
$\mathrm{SmallGroup}(96,64)$: a sample containing the CA floor family of
Remark~\ref{rem:floor}, the extraspecial isoclinism pair, a dihedral group, the
added-prime deviant $\mathrm{SL}(2,3)$, and the drop deviants
$\mathrm{SmallGroup}(96,64)$ and the nonsolvable $A_5$. For each test group we computed:
for the uniform weight, whose matrix has $0/1$ entries, the exact Smith normal form and
hence the exact conductor $C_{\mathrm{unif}}$; and for each class-indicator weight (one
per nontrivial $G$-conjugacy class inside $G'$) and each of ten seeded random weights
(one weight per nontrivial \emph{element} of $G'$, drawn i.i.d.\ uniform on
$\{1,\dots,7\}$, GAP \texttt{GlobalMersenneTwister} seed $42$; the random weights are
thus generic functions on $G'$, not class functions), the exact rank over $\Q$ and the
rank modulo $p$ for every prime in
$S=C_{\mathrm{unif}}\cup\rad(|G'|)\cup\{p:p\le47\}$. Conductors of weighted games are
thus certified within $S$; primes outside $S$ are deliberately not chased, and every
comparison below is against $\rad(|G'|)$ and $C_{\mathrm{unif}}$, both contained in $S$.

Three natural persistence conjectures \emph{fail}. Uniform-minimality, the statement that
the uniform game has the smallest conductor in its family, fails at $\mathrm{SL}(2,3)$: the CA prime
$3\in\Ccomm=\rad(b-1)=\{2,3\}$ is killed by both class-indicator weights, so the CA prime is
a feature of the uniform game, not of the family. Persistence of $\rad(|G'|)$ fails at
$S_4$: the $3$-cycle indicator game drops the prime $2$ although $4\mid|G'|$, its
conductor meeting $S$ in $\{3\}$, so $p^2$-protection is likewise a property of the
uniform game rather than of the family. Even the existence of a common prime across the
family fails at $\mathrm{SmallGroup}(96,64)$: its two class-indicator conductors meet $S$
in $\{3\}$ and $\{2\}$ respectively, so no prime of $\rad(|G'|)\cup C_{\mathrm{unif}}$,
indeed no prime at most $47$, is common to the family.

What survives, in all ten groups tested, the three generic-law deviants included, is
\emph{restoration}:
\begin{conjecture}[Union restoration]\label{conj:union}
$\rad(|G'|)\subseteq\bigcup_w\Ccomm(w)$, the union over commutator weights.
\end{conjecture}
The two \emph{drop} deviants from the generic law $\Ccomm=\rad(|G'|)$ within the sample are
cured in opposite ways.
For $\mathrm{SmallGroup}(96,64)$ (uniform conductor $\{2\}$, missing $3$), every one of ten
random weights already contains $\rad(|G'|)=\{2,3\}$: the uniform game is the arithmetically
degenerate member of its family. For $A_5$ (uniform conductor $\{2,5\}$, missing $3$), no
sampled random weight attains the full radical, but both $5$-cycle class-indicator games
recover the full radical, their conductors meeting $S$ in exactly
$\{2,3,5\}=\rad(|G'|)$: the missing prime lives in specific strata.

\section{The character probe: forced and unforced primes}\label{sec:theorem}

We now turn to the second probe. This section proves the character probe's two boundary
theorems: an index-$2$ subgroup forces the prime $2$ into the conductor, and no odd prime
is ever forced, so the probe is intrinsically $2$-biased.

\begin{theorem}[Index-$2$ forces the prime $2$]\label{thm:index2}
If $2\mid|G/G'|$ (equivalently, $G$ has a quotient $C_2$, an index-$2$ subgroup, or a
linear character of order $2$), then $2\in\Cchar(G)$.
\end{theorem}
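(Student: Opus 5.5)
The plan is to exhibit an explicit element of order $2$ in the torsion of $\Z^{S}/M$, where $M=M_{\mathrm{char}}(G)$ and $S$ is the set of conjugacy classes. By the Smith form description in Theorem~\ref{thm:instrument}, $\Delta_r(M)=d_1\cdots d_r$ equals the order of the torsion subgroup of $\Z^S/M$; equivalently, it is the index of $M$ in its saturation $(M\otimes\Q)\cap\Z^S$. So it suffices to find $w\in\Z^S$ such that $2w\in M$ but $w\notin M$.

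\emph{Generators and the candidate vector.} The trivial character forms a Galois orbit by itself, and its orbit-trace row is $\mathbf 1$. Hence $M$ is spanned by the vectors $\OrbTr(\chi)-\mathbf 1$ for $\chi\in\Irr(G)$; the general difference $P_r-P_{r'}$ is a difference of two of these. Since $2\mid|G/G'|$, there is a linear character $\lambda$ of order $2$ with kernel an index-$2$ subgroup $N$. Because $\lambda$ is rational, its Galois orbit is $\{\lambda\}$, and $\OrbTr(\lambda)=\lambda$. The row $v=\lambda-\mathbf 1$ is $0$ on classes inside $N$ and $-2$ on classes outside $N$. So $v\in M$ and $v=2w$ with $w=-\mathbf 1_{G\setminus N}\in\Z^S$, a nonzero integer vector.

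\emph{Showing $w\notin M$.} For class functions $u$, I will use the functional
\[
\ell(u)=\langle u,\mathbf 1\rangle=\frac{1}{|G|}\sum_{c\in S}|c|\,u(c).
\]
By column orthogonality, $\langle\psi,\mathbf 1\rangle\in\{0,1\}$ for every $\psi\in\Irr(G)$, with value $1$ exactly for the trivial character. Consequently $\ell(\OrbTr(\chi))$ is $1$ if $\chi$ is trivial and $0$ otherwise, since a nontrivial orbit contains no trivial character. Thus $\ell(\OrbTr(\chi)-\mathbf 1)\in\{0,-1\}\subset\Z$, and $\ell$ takes integer values on all of $M$. On the other hand, $\ell(w)=-|G\setminus N|/|G|=-\tfrac12\notin\Z$, so $w\notin M$. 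This gives $2$-torsion in the saturation modulo $M$, hence $2\mid\Delta_r(M)$, and so $2\in\Cchar(G)$ by Theorem~\ref{thm:instrument}.

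\emph{Main obstacle.} The only delicate point is the bookkeeping that turns ``$2w\in M$, $w\notin M$'' into a rank drop modulo $2$. It must be clear that the torsion of $\Z^S/M$ is exactly the saturation quotient, which holds because $w\in M\otimes\Q$, and that this torsion has order $\Delta_r$. As an alternative, I could verify the rank drop directly. The vector $\lambda-\mathbf 1$ reduces to $0$ modulo $2$, so I would show that the remaining generators cannot supply the missing dimension over $\F_2$. The functional argument above avoids that computation.

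The proof should also remark where the orbit-trace convention enters. With the full cyclotomic trace, the trivial row would be $\varphi(m)\mathbf 1$, and the integrality of $\ell$ on $M$ would change. This is consistent with the paper's remark that the convention is what makes Theorem~\ref{thm:index2} and Theorem~\ref{thm:sharp} compatible.
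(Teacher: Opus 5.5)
Your argument is correct, but it takes a different route from the paper's. The paper argues purely by rank counting. The $t-1$ vectors $\OrbTr(\chi)-\mathbf 1$, one per nontrivial Galois orbit, are $\Q$-independent, because orbit sums of distinct orbits are. So they form a $\Z$-basis of $M$, the rational rank is $t-1$, and since the basis vector $\varepsilon-\mathbf 1$ vanishes modulo $2$, the mod-$2$ rank is at most $t-2$.

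You never determine the rank or a basis. Instead you exhibit an explicit element $w=-\mathbf 1_{G\setminus N}$ of order $2$ in the torsion of $\Z^S/M$. The certificate is the functional $\ell=\langle\cdot,\mathbf 1\rangle$, which is $\Z$-valued on $M$ but equals $-\tfrac12$ at $w$, and you then use that this torsion has order $\Delta_r$. (The identity $\langle\psi,\mathbf 1\rangle=\delta_{\psi,\mathbf 1}$ you invoke is the first, or row, orthogonality relation, not column orthogonality; this is only a naming slip.)

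The two proofs certify the same fact in different ways. In the paper, $w\notin M$ is implicit in $\varepsilon-\mathbf 1$ being a basis vector of $M$; you prove $w\notin M$ directly.

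The paper's version is shorter and applies the definition of the conductor as a mod-$p$ rank drop literally. Its independence computation is also the one reused in Corollary~\ref{cor:charbound}. Yours produces an explicit nonzero class in the Smith cokernel rather than a bare rank deficit. It is also insensitive to how $M$ is generated: duplicate rows from Galois-conjugate characters, or extra generators on which $\ell$ stays integral, change nothing.
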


\begin{proof}
Since $2\mid|G/G'|$, the abelianization $G^{\mathrm{ab}}=G/G'$ has a quotient $C_2$, so its
dual contains a character $\varepsilon$ of order $2$; inflated to $G$, $\varepsilon$ is a
$\{\pm1\}$-valued linear character, distinct from and $\Q$-independent of the trivial
character $\mathbf 1$. Under the orbit-trace convention both $\mathbf 1$ and $\varepsilon$
are their own Galois orbits (both rational), so their orbit-trace rows are $\mathbf 1$ and
$\varepsilon$. The integer vector $\mathbf 1-\varepsilon$ equals $2$ on each class where
$\varepsilon=-1$ and $0$ elsewhere, so $\mathbf 1\equiv\varepsilon\pmod 2$ as rows. For the
\emph{difference lattice}: with $t$ Galois orbits, the lattice is spanned by the $t-1$
vectors $\operatorname{OrbTr}(\chi)-\mathbf 1$ over nontrivial orbits, which are
$\Q$-independent (orbit sums of distinct orbits are), so the rational rank is $t-1$; modulo
$2$ the spanning vector $\varepsilon-\mathbf 1$, supported on the coset
$\{\varepsilon=-1\}$, vanishes, leaving at most $t-2$ spanning
vectors and mod-$2$ rank at most $t-2<t-1$. Hence $2\in\Cchar(G)$; equivalently, in the
full orbit-trace row matrix modulo $2$ the rows $\mathbf 1$ and $\varepsilon$ collide,
giving the left relation $e_{\mathbf 1}-e_{\varepsilon}$.
\end{proof}

\begin{proposition}[Verification; computed]
Theorem~\ref{thm:index2} holds for all $452$ of the $452$ nonabelian groups of order $\le64$
with $2\mid\exp(G/G')$, with zero exceptions (and likewise for all $530$ such groups when
abelian groups are included).
\end{proposition}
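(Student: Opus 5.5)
This proposition is a computational verification, not a structural statement, so the plan is an exhaustive certified computation over the SmallGroups library rather than a proof. Theorem~\ref{thm:index2} already proves the implication for every finite group with $2\mid|G/G'|$. Observe that $2\mid\exp(G/G')\iff 2\mid|G/G'|$, so the hypotheses coincide. The proposition therefore cannot fail for mathematical reasons. Its role is to audit the implementation of the orbit-trace probe against the theorem. I would enumerate all groups of order $\le64$ in GAP, keeping those with $2\mid\exp(G/G')$: the $452$ nonabelian ones and, separately, all $530$ including abelian ones.

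For each group I would proceed in four steps.
\begin{enumerate}
\item Compute the character table and partition $\Irr(G)$ into Galois orbits. The orbit of $\chi$ is obtained by applying the Galois automorphisms $\zeta_m\mapsto\zeta_m^k$ for $k$ coprime to $m=\exp G$.
\item Form the integer orbit-trace row $\OrbTr(\chi)$ on conjugacy classes. It is rational and integral because it is a trace of algebraic integers.
\item Build the difference matrix with rows $\OrbTr(\chi)-\mathbf 1$ over the nontrivial orbits.
\item Compute its rank over $\Q$ and its rank over $\F_2$, and check that the $\F_2$-rank is strictly smaller. By Theorem~\ref{thm:instrument} this is equivalent to $2\mid\Delta_r$.
\end{enumerate}
As an independent cross-check, I would verify directly the mechanism used in the proof of Theorem~\ref{thm:index2}. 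That means locating a rational linear character $\varepsilon$ of order $2$ and confirming that the row $\varepsilon-\mathbf 1$ vanishes mod $2$. A second pipeline in Python, with exact integer Smith forms, would reproduce the counts, as in the audit trail for $\mathrm{SmallGroup}(64,73)$.

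The main obstacle is correctness of the Galois-orbit computation, not the rank test. A wrong orbit partition, for example treating Galois-conjugate characters as separate rows, or using the full cyclotomic trace scaled by $\varphi(m)$, would change the lattice and could spuriously create or destroy the prime $2$. This is exactly the convention issue flagged after the definition of the probes. I would guard against it in two ways. First, check that the orbit-trace row of the trivial character is exactly $\mathbf 1$. Second, check that the number of orbits equals the number of rational classes, since $t$ equals the number of $\Q$-conjugacy classes of $G$. Any mismatch would indicate an implementation error rather than a counterexample to Theorem~\ref{thm:index2}.
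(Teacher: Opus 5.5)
Your proposal is correct and takes essentially the paper's approach: the paper supports this proposition only by an exhaustive exact GAP computation over the SmallGroups library with an independent Python cross-check. Your observation that $2\mid\exp(G/G')\iff 2\mid|G/G'|$ is also right, and it makes the zero-exception claim an immediate corollary of Theorem~\ref{thm:index2}, so the only genuinely computed content is the two counts. These are consistent: $17$ of the $469$ nonabelian groups of order $\le 64$ have odd $|G/G'|$, giving $452$, and there are $78$ abelian groups of even order $\le 64$, giving $530$.
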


\begin{theorem}[Sharpness: the orbit-trace character probe is $2$-biased]\label{thm:sharp}
The odd-prime analogue of Theorem~\ref{thm:index2} fails for every odd prime $p$: the cyclic
group $C_p$ has a linear character of order $p$, yet $\Cchar(C_p)$ is empty; in particular
$p\notin\Cchar(C_p)$.
\end{theorem}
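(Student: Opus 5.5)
The plan is to compute the orbit-trace matrix of $C_p$ explicitly and check that its difference lattice is unimodular. Index the conjugacy classes of $C_p=\langle g\rangle$ by the elements $e,g,\dots,g^{p-1}$. The irreducible characters are $\chi_k(g^j)=\zeta^{jk}$ with $\zeta=e^{2\pi i/p}$ and $0\le k\le p-1$. The trivial character $\chi_0$ is rational, so it forms a Galois orbit by itself. The nontrivial characters $\chi_1,\dots,\chi_{p-1}$ form a single Galois orbit, since $\Gal(\Q(\zeta)/\Q)\cong(\Z/p)^\times$ acts transitively on the primitive $p$-th roots of unity. There are therefore exactly $t=2$ orbits.

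Next compute the two orbit-trace rows. The trivial orbit gives the all-ones row $\mathbf 1$. For the nontrivial orbit, the row value at $e$ is $\sum_{k=1}^{p-1}1=p-1$. At $g^j$ with $j\ne0$ it is $\sum_{k=1}^{p-1}\zeta^{jk}=-1$, because the full sum of $p$-th roots of unity vanishes. So the rows are $(1,\dots,1)$ and $(p-1,-1,\dots,-1)$, which agrees with Remark~\ref{rem:whydiff}. The difference lattice is spanned by the single vector
\[
v=(p-1,-1,\dots,-1)-(1,\dots,1)=(p-2,-2,\dots,-2).
\]

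Then apply Theorem~\ref{thm:instrument}. The rational rank is $r=1$, since $v\ne0$. The divisor $\Delta_1$ is the gcd of the entries of $v$, which is $\gcd(p-2,2)$. Because $p$ is odd, $p-2$ is odd, so $\Delta_1=1$. No prime divides $\Delta_1$, hence $\Cchar(C_p)=\emptyset$ and in particular $p\notin\Cchar(C_p)$. The other half of the claim, that $C_p$ has a linear character of order $p$, is immediate from $\chi_1$. This gives the failure of the odd-prime analogue of Theorem~\ref{thm:index2}.

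No step is a real obstacle. The one place needing care is that the argument uses the difference lattice, not the row lattice. The row lattice has Smith form $(1,p)$ and would detect $p$. Subtracting the two rows is exactly what removes that factor, leaving the odd entry $p-2$ as the witness of unimodularity. It is also worth noting where oddness of $p$ enters: for $p=2$ the vector becomes $(0,-2)$, giving $\Delta_1=2$, which is consistent with Theorem~\ref{thm:index2}.
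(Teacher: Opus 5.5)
Your proposal is correct and follows essentially the same route as the paper's proof: identify the two Galois orbits of $C_p$, compute the orbit-trace rows $\mathbf 1$ and $(p-1,-1,\dots,-1)$, and observe that the single difference vector $(p-2,-2,\dots,-2)$ is primitive because $\gcd(p-2,2)=1$ for odd $p$. Your added $p=2$ sanity check and the row-lattice caveat match the paper's own remarks on where oddness enters.
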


\begin{proof}
Let $\omega$ generate the dual of $C_p$. Its Galois orbit under $\Gal(\Q(\zeta_p)/\Q)$ is
$\{\omega,\omega^2,\dots,\omega^{p-1}\}$, the full set of nontrivial characters, so $C_p$
carries exactly two orbits. On the element $g^j$ the orbit-trace value is
$\sum_{k=1}^{p-1}\zeta_p^{\,jk}$, which equals $p-1$ if $j=0$ and $-1$ otherwise, since
$\sum_{k=0}^{p-1}\zeta_p^{\,jk}=0$ for $j\not\equiv0$. The difference lattice is therefore
spanned by the single vector $(p-1,-1,\dots,-1)-(1,\dots,1)=(p-2,-2,\dots,-2)$, whose
entries have greatest common divisor $\gcd(p-2,2)=1$ because $p$ is odd. A rank-one lattice
spanned by a primitive vector has trivial Smith form, so $\Cchar(C_p)=\emptyset$.

The source of the asymmetry is that $-1$ is the unique nontrivial root of unity fixed by
$\Gal(\Q(\zeta_m)/\Q)$: only for $p=2$ does the orbit trace of a nontrivial linear character
stay $\{\pm1\}$-valued and collide with $\mathbf 1$ modulo $p$, as in the proof of
Theorem~\ref{thm:index2}. For odd $p$ the orbit trace averages the $p-1$ conjugates to $-1$
and the collision disappears.
\end{proof}

\begin{remark}[Nonabelian persistence]\label{rem:sharprates}
The failure is not confined to the cyclic witness. Among nonabelian groups of order $\le64$
with the relevant quotient, the odd analogue holds for most but not all (computed rates
$p{=}3$: $42/48$; $p{=}5$: $7/8$; $p{=}7$: $3/4$), with explicit nonabelian
counterexamples: $A_4$ for $p=3$, the Frobenius group $C_{11}\rtimes C_5$ for $p=5$, and
$(C_2)^3\rtimes C_7$ for $p=7$, each with a linear character of the relevant odd order and
empty character conductor. Every computed counterexample is a semidirect product
$N\rtimes C_p$ with $N$ abelian, in which the order-$p$ linear characters inflate from the
$C_p$ quotient and orbit-average away by the same mechanism as in the theorem.
\end{remark}

\begin{remark}
Theorem~\ref{thm:sharp} is a caution for practice: passing to rational or orbit-averaged
character data, a routine normalization, is not arithmetically neutral and privileges
the prime $2$.
\end{remark}

\section{The character landscape: tunability}\label{sec:character}

This section maps the character probe empirically over the $469$ nonabelian groups of
order $\le64$: what layer it tracks, where it fails in both directions, and how it
complements the commuting probe. If the framework were one-trick, every game would read
the same primes. It does not.

\begin{proposition}[No exact order-based law; computed]\label{prop:charmatch}
No single order-based law is exact for the character conductor. Over the $469$ nonabelian
groups of order $\le64$, $\Cchar(G)=\rad(|G|)$ for $446$ ($95.1\%$) and
$\Cchar(G)=\rad(|G/Z(G)|)$ for $418$ ($89.1\%$). The two targets coincide on $p$-groups,
so the gap is carried entirely by the non-$p$-groups, where $\rad(|G|)$ fits better; we
therefore take $\rad(|G|)$ as the probe's tracking target, with $\rad(|G/Z|)$ the
runner-up (indeed $\Cchar(G)\subseteq\rad(|G|)$ unconditionally,
Corollary~\ref{cor:charbound}, so the tracking law can only fail by undershoot). The
guess ``primes dividing character degrees'' is falsified. Exactness on
families is a theorem target.
\end{proposition}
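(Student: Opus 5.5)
The statement is a computed proposition, so the plan is to separate the parts that are logically forced from the parts that come from exhaustive computation, and to treat the computation as a certificate pipeline of the kind used for Proposition~\ref{prop:comm}.

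\emph{Computation.} For each of the $469$ nonabelian groups of order $\le64$ in the SmallGroups library, I will compute the character table and the Galois orbits of $\Irr(G)$. From these I form the orbit-trace rows $\OrbTr(\chi)$, which are integer vectors on the conjugacy classes, and then take the difference lattice spanned by $\OrbTr(\chi)-\mathbf 1$ over nontrivial orbits. Its rational rank is $t-1$, as in the proof of Theorem~\ref{thm:index2}. I obtain the conductor from the exact Smith normal form, or equivalently from $\Delta_{t-1}$ via Theorem~\ref{thm:instrument}. I then compare $\Cchar(G)$ with $\rad(|G|)$ and with $\rad(|G/Z(G)|)$, which yields the counts $446$ and $418$. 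To guard the numbers, I will run the tally in two independent pipelines (GAP and an exact-integer Python Smith form), as elsewhere in the paper.

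\emph{Forced parts.} The claim that the two targets coincide on $p$-groups needs no computation. For a nonabelian $p$-group, $Z(G)$ is a proper subgroup, so $G/Z$ is a nontrivial $p$-group. Hence $\rad(|G/Z|)=\{p\}=\rad(|G|)$. It follows that the $28$-group discrepancy between the two match counts lies entirely among non-$p$-groups. Because $\rad(|G/Z|)\subseteq\rad(|G|)$ always holds, the comparison there only concerns primes dividing $|Z|$ but not $|G/Z|$. The remark that the tracking law fails only by undershoot is then immediate from Corollary~\ref{cor:charbound}: since $\Cchar(G)\subseteq\rad(|G|)$, any mismatch must be a missing prime.

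\emph{Remaining claims.} To falsify the guess ``primes dividing character degrees'', it is enough to exhibit one group from the census where the conductor differs from the set of primes dividing character degrees. A group with a quotient $C_2$ but only odd character degrees would serve, since Theorem~\ref{thm:index2} puts $2$ into the conductor regardless; I would pick the first such group the pipeline reports. The sentence ``no single order-based law is exact'' is interpretive: it records that both natural targets have exceptions in the census. The main obstacle is therefore only computational reliability, namely exact Galois-orbit identification and exact integer Smith forms. I would handle it by working with rational orbit sums, which are integer-valued, and by cross-checking the mod-$p$ ranks against $\Delta_{t-1}$.
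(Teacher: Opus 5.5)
Your proposal is correct and matches the paper's treatment. The counts come from exhaustive exact computation, the coincidence of the two targets on nonabelian $p$-groups is forced, and the undershoot remark is Corollary~\ref{cor:charbound}. Your index-$2$ witness class for the degree guess is nonempty: for example $C_2\times A_4$ has all degrees odd, yet $2\in\Cchar$ by Theorem~\ref{thm:index2}. The paper's own witnesses in Remark~\ref{rem:cases} are $A_5$, which has empty conductor, and $C_3\times D_8$, which has $3\in\Cchar$ with degrees $1,2$.
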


Both counts are order-$\le64$ empirics, dominated by the $311$ nonabelian $2$-groups in
range (out of $469$), on which the two targets coincide at $\{2\}$; the comparison
between the laws is carried by the remaining $158$ groups. Nor are the laws asymptotic:
Theorem~\ref{thm:dihedral} exhibits an infinite family, the dihedral groups $D_{2q}$, on
which $\Cchar=\{2\}$ differs from both targets. That the deviations are structured, the
forced prime of Theorem~\ref{thm:index2}, the silence at $A_5$, the dihedral collapse to
$\{2\}$, is what makes the probe selective rather than a noisy copy of the full-support
law.

\begin{remark}[Case studies]\label{rem:cases}
Three exact data points delimit the character probe. $A_5$ has \emph{empty} character
conductor: its four orbit-trace rows (the two $3$-dimensional characters form one Galois
orbit) span a lattice with unimodular Smith form, so the probe is silent on a perfect group
with $\rad(|G|)=\{2,3,5\}$: both laws of Proposition~\ref{prop:charmatch} fail here by
undershoot. $\mathrm{SL}(2,3)$ has $2\in\Cchar$ although
$|G/G'|=3$ is odd: the converse of Theorem~\ref{thm:index2} fails. And
$C_3\times D_8$ separates the two laws: $\Cchar=\{2,3\}=\rad(|G|)$, strictly exceeding
$\rad(|G/Z|)=\{2\}$, an exact match for the tracking target and an overshoot of the
runner-up.
\end{remark}

\begin{proposition}[Complementarity; computed]\label{prop:compl}
Every one of the $469$ nonabelian groups of order $\le64$ has a nonempty conductor in at
least one probe. The two conductors are disjoint for $20$ of them ($13$ with both
nonempty), and the union recovers the full prime support of the group more often than
either probe alone: $\Ccomm(G)\cup\Cchar(G)=\rad(|G|)$ for $450$ of $469$ ($95.9\%$),
against $446$ for the character conductor (Proposition~\ref{prop:charmatch}) and $342$
for the commuting conductor individually.
\end{proposition}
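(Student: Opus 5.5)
This proposition is a computed statement, so its proof is a certified exhaustive computation rather than a structural argument. The plan is to enumerate the $469$ nonabelian groups of order $\le64$ from the SmallGroups library and compute both conductors exactly for each, without floating-point or heuristic rank estimates.

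For $\Ccomm(G)$ I would use the compressed matrix $\BG$ together with Proposition~\ref{prop:Bg}, and take the Smith normal form of its row-difference lattice. For $\Cchar(G)$ I would form the orbit-trace rows, one per Galois orbit of $\Irr(G)$. Their entries are rational integers: they are traces of algebraic integers that lie in $\Q$. I would then take the Smith form of the differences $\OrbTr(\chi)-\mathbf 1$. By Theorem~\ref{thm:instrument}, each conductor is the set of primes dividing some nontrivial invariant factor, which avoids testing primes one at a time.

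With both conductors stored per group, the four tallies are mechanical:
\begin{enumerate}[label=\textup{(\roman*)}]
\item $\Ccomm(G)\cup\Cchar(G)\neq\emptyset$ for all $469$ groups;
\item the disjointness count $20$, together with the sub-count $13$ where both conductors are nonempty;
\item the union equals $\rad(|G|)$ for $450$ groups;
\item the individual match counts, $446$ for $\Cchar$ (cross-checked against Proposition~\ref{prop:charmatch}) and $342$ for $\Ccomm$.
\end{enumerate}

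A short structural remark reduces the work in (i). If $2\mid|G/G'|$, then Theorem~\ref{thm:index2} already gives $2\in\Cchar(G)$. For the remaining groups, Corollary~\ref{cor:charbound} guarantees that the union can only undershoot $\rad(|G|)$ on the character side. So (i) only has to be checked directly on the perfect-abelianization-odd stratum, which is small.

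The main obstacle is reliability, not difficulty. Each step has a failure mode I would guard against. First, the Galois orbits must be built correctly: the action on character values is through $\Gal(\Q(\zeta_{\exp G})/\Q)$, and I would merge characters via \texttt{GaloisCc}-type maps. Second, the orbit traces must actually be integral; I would assert this for every entry. Third, the difference lattice must be used rather than the row lattice, since Remark~\ref{rem:whydiff} shows the two differ for this probe.

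I would therefore run two independent pipelines, GAP and an exact-integer Python Smith-form implementation, and require agreement on every group. As regression checks, I would reproduce the control values already stated earlier: $\Cchar(A_5)=\emptyset$, $2\in\Cchar(\mathrm{SL}(2,3))$, and $\Cchar(C_3\times D_8)=\{2,3\}$, along with the CA table entries. I would then record per-group certificates, consisting of the Smith forms and the orbit lists, so that every count in the proposition can be re-derived.
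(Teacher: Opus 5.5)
Your plan matches the paper's: the proposition is certified by exact, exhaustive computation of both conductors over the SmallGroups library, using Smith forms of the compressed $\BG$ difference lattice and of the orbit-trace difference lattice read through Theorem~\ref{thm:instrument}, cross-checked in GAP and Python with per-group certificates as in Section~\ref{sec:repro}. One small correction to your shortcut for (i): Theorem~\ref{thm:index2} alone reduces the nonemptiness check to the groups with $|G/G'|$ odd, whereas Corollary~\ref{cor:charbound} only bounds $\Cchar(G)$ from above and plays no role in nonemptiness.
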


The margin over the character probe alone is small on this metric, and deliberately so:
recovering $\rad(|G|)$ is a coarse yardstick, not the point of the commuting probe. Its
value is that it reads a different layer (Theorem~\ref{thm:dihedral}), carries structural
theorems (Theorems~\ref{thm:isoclinism} and~\ref{thm:product}), and detects primes outside
$\rad(|G|)$ entirely (Corollary~\ref{cor:nondivisor}), which no probe measured by
$\rad(|G|)$-recovery can register.

Writing $\mathrm{Drop}_{\mathrm{char}}(G)=\rad(|G/Z|)\setminus\Cchar(G)$ for the drops
relative to the runner-up law of Proposition~\ref{prop:charmatch} (the law whose failures
the obstruction below localizes), and
$A_p=[p\mid|\Fit(G)|]$, $B_p=[p\mid|G/\Fit(G)|]$, where $\Fit(G)$ is the Fitting subgroup,
the character drops satisfy a layer
condition that is a necessary obstruction rather than a classifier.

\begin{proposition}[Fitting-layer obstruction; computed]\label{prop:xor}
Over the $616$ pairs $(G,p)$ with $p\mid|G/Z|$, one has
$p\in\mathrm{Drop}_{\mathrm{char}}(G)\Rightarrow A_p\oplus B_p$ for all $31$ drops, with
zero exceptions. The confusion matrix is
$\mathrm{TP}=31,\mathrm{FN}=0,\mathrm{FP}=503,\mathrm{TN}=82$
(precision $\approx0.06$): the XOR is an exact necessary obstruction identifying where
the $\rad(|G/Z|)$ law can fail, not an explanation of the character conductor. The
zero-miss count is not vacuous: under a uniform null model, random placement of the $31$
drops among the $616$ pairs ($534$ of them XOR-positive) avoids all misses with
probability
$\binom{534}{31}/\binom{616}{31}\approx0.012$.
\end{proposition}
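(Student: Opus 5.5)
Since this is a computed statement, the plan is an exhaustive certified computation together with an exact evaluation of the null-model probability. I will not look for a structural proof.

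\emph{Step 1: enumerate the test set.} I will loop over $\mathrm{SmallGroup}(n,k)$ for $n\le64$ and keep the $469$ nonabelian groups, the same census as Proposition~\ref{prop:charmatch}. For each $G$ I will record $Z(G)$, $\Fit(G)$, and the set of primes dividing $|G/Z(G)|$. This produces the $616$ pairs $(G,p)$. For each pair I compute the two bits
\[
A_p=[p\mid|\Fit(G)|],\qquad B_p=[p\mid|G/\Fit(G)|].
\]
Every such $p$ divides $|G|=|\Fit(G)|\cdot|G/\Fit(G)|$, so $A_p\vee B_p$ always holds. Hence $A_p\oplus B_p$ fails exactly when $p$ divides both layers. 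As a cross-check, the XOR-negative pairs should number $616-534=82$.

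\emph{Step 2: compute $\Cchar(G)$ exactly.}
\begin{enumerate}[label=\textup{(\alph*)},leftmargin=2.2em,itemsep=2pt]
\item Take the character table of $G$ and group $\Irr(G)$ into Galois orbits.
\item Form the integer orbit-trace rows $\OrbTr(\chi)$ on conjugacy classes. The entries are rational algebraic integers, hence integers.
\item Build the difference lattice spanned by $\OrbTr(\chi)-\mathbf 1$.
\item Compute its rational rank and its Smith normal form exactly over $\Z$.
\item Read off $\Cchar(G)$ through Theorem~\ref{thm:instrument}.
\end{enumerate}
From this I obtain $\mathrm{Drop}_{\mathrm{char}}(G)=\rad(|G/Z|)\setminus\Cchar(G)$. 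I will validate the pipeline on the known cases before trusting it: $\Cchar(A_5)=\emptyset$, $2\in\Cchar(\mathrm{SL}(2,3))$, and $\Cchar(C_3\times D_8)=\{2,3\}$ (Remark~\ref{rem:cases}). I will also confirm Theorem~\ref{thm:index2} on all relevant groups. An independent second implementation, for example Python recomputing the Smith forms from exported tables, guards against pipeline error.

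\emph{Step 3: tabulate and evaluate the null model.} I will cross-tabulate drop versus XOR over the $616$ pairs, checking $\mathrm{TP}=31$, $\mathrm{FN}=0$, $\mathrm{FP}=503$ and $\mathrm{TN}=82$. The zero-miss claim is the statement $\mathrm{FN}=0$, so all $31$ drops must lie among the XOR-positive pairs, where $p$ divides exactly one layer.

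The null probability is a hypergeometric count. Place $31$ drops uniformly among the $616$ pairs, all avoiding the $82$ XOR-negative pairs; this has probability $\binom{534}{31}/\binom{616}{31}=\prod_{i=0}^{30}\frac{534-i}{616-i}$. I will evaluate this in exact rational arithmetic and confirm it is $\approx0.012$.

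\emph{Main obstacle.} I expect the hard part to be correctness of the conductor computation, not its cost. The delicate points are the Galois-orbit grouping and the use of the difference lattice rather than the row lattice (Remark~\ref{rem:whydiff}). A slip in either changes which primes drop, so the independent re-implementation and the control suite are the real certificate.
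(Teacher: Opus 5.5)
Your plan reproduces the paper's own certificate. The paper also relies on an exhaustive exact computation over the $469$ nonabelian groups of order $\le64$, done in GAP, cross-checked in Python, and validated on the control cases of Remark~\ref{rem:cases}. It then cross-tabulates the results and applies the hypergeometric null model. Your observation that $A_p\vee B_p$ always holds is correct. It shows that XOR-negativity means exactly that $p$ divides both $|\Fit(G)|$ and $|G/\Fit(G)|$, which the paper leaves implicit.

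The one step that will not go through as written is your final ``confirm it is $\approx0.012$''. The exact evaluation you propose gives
\[
\frac{\binom{534}{31}}{\binom{616}{31}}=\prod_{i=0}^{30}\Bigl(1-\frac{82}{616-i}\Bigr)\approx\exp(-4.549)\approx0.0106,
\]
not $0.012$. The figure $0.012$ is $(534/616)^{31}\approx0.0119$, which is the with-replacement (binomial) approximation. The factors $(534-i)/(616-i)$ strictly decrease in $i$, so the exact hypergeometric value is strictly smaller than that approximation. You should therefore report $\approx0.011$, or label $0.012$ explicitly as the binomial approximation, rather than expect a match. The conclusion that the zero-miss count is not vacuous is unaffected, and the smaller probability makes it slightly stronger.
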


\section{Dihedral complementarity and negative results}\label{sec:negatives}

The two arcs meet in a single family: for dihedral groups the two probes read provably
disjoint primes. We prove this, then record the two negative controls that calibrate the
framework.

\begin{theorem}[Dihedral complementarity]\label{thm:dihedral}
Let $q$ be an odd prime. Then $\Cchar(D_{2q})=\{2\}$ and $\Ccomm(D_{2q})=\{q\}$.
\end{theorem}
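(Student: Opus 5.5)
The two halves are independent, and each reduces to an earlier result plus a small explicit computation.

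\emph{Commuting half.} I would show that $D_{2q}$ is a nonabelian CA-group and then apply Theorem~\ref{thm:CA}. Write $D_{2q}=\langle r,s\mid r^q=s^2=1,\ srs^{-1}=r^{-1}\rangle$ with $q$ odd. Then $Z(D_{2q})=1$.
\begin{itemize}
\item Each nontrivial rotation has centralizer $\langle r\rangle$, of order $q$.
\item Each reflection $sr^i$ has centralizer $\{1,sr^i\}$, of order $2$.
\end{itemize}
All these centralizers are abelian, so $D_{2q}$ is CA. Its maximal abelian subgroups are $\langle r\rangle$ together with the $q$ reflection subgroups, giving $b=q+1$. Theorem~\ref{thm:CA} then gives a single invariant factor $b-1=q$, so $\Ccomm(D_{2q})=\rad(q)=\{q\}$.

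\emph{Character half.} Here I would compute the orbit-trace matrix explicitly and read off its Smith form. The classes are $\{1\}$, the $(q-1)/2$ rotation classes $\{r^{\pm j}\}$, and the single reflection class. The irreducible characters are:
\begin{itemize}
\item the trivial character $\mathbf 1$;
\item the sign character $\varepsilon$, equal to $1$ on rotations and $-1$ on reflections;
\item the $(q-1)/2$ two-dimensional characters $\chi_k$, with $\chi_k(r^j)=\zeta^{jk}+\zeta^{-jk}$ and $\chi_k=0$ on reflections, where $\zeta=\zeta_q$.
\end{itemize}
The $\chi_k$ are permuted transitively by $\Gal(\Q(\zeta)^+/\Q)$, so they form one Galois orbit. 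Hence there are $t=3$ orbits and the rational rank is $2$. The orbit trace of the $\chi_k$ orbit is $\sum_{k=1}^{q-1}\zeta^{jk}$, which equals $q-1$ at the identity, $-1$ at every nontrivial rotation, and $0$ at reflections.

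The difference lattice is spanned by two vectors, written on the column types (identity, rotations, reflection):
\[
\varepsilon-\mathbf 1=(0,0,-2),\qquad \OrbTr(\chi)-\mathbf 1=(q-2,\,-2,\,-1).
\]
By Lemma~\ref{lem:dup}, repeated rotation columns do not affect the Smith form. Among the $2\times2$ minors, the one on the identity and reflection columns is $-2(q-2)$, and the one on the rotation and reflection columns is $-4$. Since $q$ is odd, $\gcd(2(q-2),4)=2$, so $\Delta_2=2$. The entry $-1$ gives $\Delta_1=1$. The Smith form is therefore $(1,2)$, and Theorem~\ref{thm:instrument} yields $\Cchar(D_{2q})=\{2\}$. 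As a consistency check, Theorem~\ref{thm:index2} independently forces $2\in\Cchar$, because $|D_{2q}/D_{2q}'|=2$.

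The main obstacle is confirming that the two-dimensional characters really form a single Galois orbit. This requires $\Gal(\Q(\zeta_q)/\Q)$ to act transitively on the pairs $\{k,-k\}$, which follows from cyclicity of $(\Z/q)^\times$. If they split into several orbits, the rank and the minors would change, so this is the step I would check carefully.
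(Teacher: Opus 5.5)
Your proposal is correct and follows the paper's proof essentially step for step. The commuting half applies the CA theorem with $b=q+1$. The character half uses the three orbit-trace rows $\mathbf 1,\varepsilon,\Omega$ and the $2\times2$ minors to get $\Delta_2=\gcd(4,2(q-2))=2$; your compression of the repeated rotation columns via Lemma~\ref{lem:dup} is only a cosmetic variant of the paper's direct enumeration of minors.

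Two harmless nits. In your row order the identity--reflection minor is $+2(q-2)$, not $-2(q-2)$. The single-orbit claim rests on $q$ being prime, so that every $k\not\equiv0$ is a unit; it does not rest on $(\Z/q)^\times$ being cyclic.
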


\begin{proof}
\emph{Character half.} The classes of $D_{2q}$ are the identity, the $(q-1)/2$ rotation
classes $\{r^{\pm k}\}$, and one reflection class. The irreducibles are $\mathbf 1$, the sign
character $\varepsilon$, and $(q-1)/2$ two-dimensional characters
$\chi_j(r^k)=\zeta_q^{jk}+\zeta_q^{-jk}$, $\chi_j(s)=0$, forming a \emph{single} Galois orbit
($(\Z/q)^\times/\{\pm1\}$ acts transitively on $\{1,\dots,(q-1)/2\}$ for $q$ prime). The
orbit-trace table thus has exactly three rows: $\mathbf 1$, $\varepsilon$, and the orbit sum
$\Omega$ with $\Omega(1)=q-1$, $\Omega(r^k)=\sum_{j=1}^{q-1}\zeta_q^{jk}=-1$ for
$k\not\equiv0$, and $\Omega(s)=0$. The difference lattice is spanned by
$v_1=\varepsilon-\mathbf 1$ ($-2$ on the reflection class, $0$ elsewhere) and
$v_2=\Omega-\mathbf 1=(q-2,-2,\dots,-2,-1)$ across (identity, rotations, reflection); these
are $\Q$-independent, so the rational rank is $2$ and
$\Cchar(D_{2q})=\{p:p\mid\Delta_2\}$. A $2\times2$ minor of $[v_1;v_2]$ using two
non-reflection columns vanishes ($v_1$ is zero there); one using a non-reflection column $c$
and the reflection column equals $\pm2\,v_2(c)\in\{\pm4,\pm2(q-2)\}$. As $q-2$ is odd,
$\Delta_2=\gcd\bigl(4,2(q-2)\bigr)=2$, so $\Cchar(D_{2q})=\{2\}$ for every odd prime $q$.
(That $2\in\Cchar$ also follows from Theorem~\ref{thm:index2}; the minor computation
additionally excludes $q$ and every other prime, which Theorem~\ref{thm:sharp} alone does
not.)

\emph{Commuting half.} $D_{2q}$ is a CA-group: the centralizer of a rotation is $C_q$ and of
a reflection is the order-$2$ subgroup it generates, all abelian. The maximal abelian
subgroups are $C_q$ and the $q$ reflection subgroups, so $b=q+1$, with blocks of unequal
sizes $q-1$ and $1$, which Theorem~\ref{thm:CA} permits since it carries no equal-block
hypothesis.
Hence $\Ccomm(D_{2q})=\rad(b-1)=\rad(q)=\{q\}$.
\end{proof}

\emph{Pipeline validation.} The theorem needs no computation. As a check on the
computational instrument rather than on the proof, both halves are reproduced exactly in
GAP for $q=3,5,7,11$ (Table~\ref{tab:dih}), and the character minor computation
symbolically through $q=101$.

\begin{table}[t]
\centering
\begin{tabular}{@{}cccccc@{}}
\toprule
$q$ & $G$ & $|G|$ & $b$ & $\Ccomm$ & $\Cchar$\\
\midrule
3  & $D_6$    & 6  & 4  & $\{3\}$  & $\{2\}$\\
5  & $D_{10}$ & 10 & 6  & $\{5\}$  & $\{2\}$\\
7  & $D_{14}$ & 14 & 8  & $\{7\}$  & $\{2\}$\\
11 & $D_{22}$ & 22 & 12 & $\{11\}$ & $\{2\}$\\
\bottomrule
\end{tabular}
\caption{Verification of Theorem~\ref{thm:dihedral}: $b=q+1$, and the two probes read the
two layers as disjoint primes.}
\label{tab:dih}
\end{table}

This is the cleanest tunability statement: the two probes read the rotation subgroup $C_q$
and the order-$2$ quotient as disjoint primes.

\paragraph{Negative results.}
Two negatives constrain the framework. The \emph{class-algebra game} (structure constants of
the class algebra) has empty conductor for all $469$ nonabelian groups of order $\le64$: a silent probe.
The second control is a \emph{faithful} cyclotomic residue reduction of the character game:
take the full character table $X$ over $K=\Q(\zeta_{\exp G})$, and for a rational prime $p$
and a prime ideal $\mathfrak p$ of $\mathcal O_K$ above $p$, compare the rank of $X$ over
the residue field $\mathcal O_K/\mathfrak p$ with its rank over $K$. Unlike the orbit-trace
probe, this control needs no computation: it is provably full-support. The statement is
classical modular representation theory; we record it with a short self-contained proof.

\begin{proposition}[The faithful reduction is full-support]\label{prop:faithful}
For every finite group $G$, every rational prime $p$, and every prime $\mathfrak p$ of
$\mathcal O_K$ above $p$, the rank of the character table falls modulo $\mathfrak p$ if and
only if $p\mid|G|$. The faithful conductor is therefore exactly $\rad(|G|)$ for every
finite group.
\end{proposition}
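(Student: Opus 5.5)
The plan is to reduce everything to the second orthogonality relation, which expresses the product $\overline{X}^{\,T}X$ of the full character table $X$ (rows $\Irr(G)$, columns conjugacy classes $c_1,\dots,c_k$) in closed form. Writing $X=(\chi_i(c_j))$, column orthogonality gives
\[
\sum_{i}\overline{\chi_i(c_j)}\,\chi_i(c_l)=\delta_{jl}\,|C_G(c_j)|,
\]
so $X$ is square and invertible over $K$, with $\det(X)\,\overline{\det(X)}=\pm\prod_j|C_G(c_j)|$. All entries of $X$ lie in $\mathcal O_K$, and complex conjugation preserves $\mathcal O_K$ and permutes the primes above $p$. Hence the rank of $X$ over $\mathcal O_K/\mathfrak p$ falls below $k$ exactly when $\mathfrak p\mid\det X$.

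The key step is to determine which primes divide $\det X$. If $p\nmid|G|$, then no $|C_G(c_j)|$ is divisible by $p$. The displayed relation then shows that $\det(X)\overline{\det(X)}$ is a $\mathfrak p$-unit, so $\det X\notin\mathfrak p$ and the rank does not fall. Conversely, suppose $p\mid|G|$. The class of the identity has $|C_G(e)|=|G|\equiv0\pmod p$, so $\det X\,\overline{\det X}\in p\Z$. This by itself only says that $p$ divides the product of $\det X$ and its conjugate, which could a priori be absorbed by the conjugate ideal $\bar{\mathfrak p}\neq\mathfrak p$.

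I expect this direction to be the main obstacle. I will handle it not through the determinant but by exhibiting an explicit mod-$\mathfrak p$ column relation. Take $x$ to be an element of order $p$ (Cauchy). For every $\chi$, the value $\chi(x)$ is a sum of $p$-th roots of unity, and $\zeta\equiv1\pmod{\mathfrak p}$ for every $p$-power root of unity $\zeta$, since $1-\zeta$ divides $p$ in $\mathcal O_K$. Therefore $\chi(x)\equiv\chi(1)\pmod{\mathfrak p}$ for all $\chi$. The columns of the classes of $1$ and $x$ thus coincide modulo $\mathfrak p$, which is the standard fact that characters are constant mod $\mathfrak p$ on $p$-sections. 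These are two distinct classes, so the rank over the residue field is at most $k-1<k$. This argument is independent of the choice of $\mathfrak p$ above $p$, which settles the difficulty.

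Finally, I will translate the result into the conductor language of Theorem~\ref{thm:instrument}: the rank drops exactly at the primes $p$ dividing $|G|$. Hence the faithful conductor is $\rad(|G|)$ for every $G$, including abelian $G$ and trivial edge cases. For $G=1$ the table is the single entry $1$, so the conductor is empty, consistent with $\rad(1)$.
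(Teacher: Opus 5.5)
Your argument is correct. For $p\nmid|G|$ it is the paper's: column orthogonality makes $\det X\cdot\overline{\det X}=\prod_j|C_G(c_j)|$ a rational integer prime to $p$, hence outside $\mathfrak p$.

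For $p\mid|G|$ you take a genuinely different route. The paper handles the conjugate-ideal worry you rightly raise with one more symmetry. Complex conjugation permutes $\Irr(G)$, hence permutes the rows of $X$, so $\overline{\det X}=\pm\det X$ and $(\det X)^2=\pm\prod_j|C_G(c_j)|\in\Z$. Then $\det X\in\mathfrak p\iff(\det X)^2\in\mathfrak p\iff p\mid|G|$, which gives both directions at once and is visibly independent of $\mathfrak p$. You instead exhibit an explicit collision, modulo $\mathfrak p$, of the columns of $1$ and of an element of order $p$. This is the $p$-section mechanism the paper only cites in its attribution remark on Brauer theory.

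The paper's route buys a single integer whose prime support is the conductor. Yours buys an explicit kernel vector. Run on every $p$-singular class via $\chi(g)\equiv\chi(g_{p'})\pmod{\mathfrak p}$, it also gives the sharper bound that the rank drops by at least the number of $p$-singular classes, the easy half of Brauer's count.

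One justification needs repair. Writing ``$1-\zeta$ divides $p$'' says $p\in(1-\zeta)$, which is the wrong inclusion for concluding $1-\zeta\in\mathfrak p$. Argue instead that $(1-\zeta)^p\equiv1-\zeta^p=0\pmod{p\mathcal O_K}$. Then $(1-\zeta)^p\in\mathfrak p$, and primality gives $1-\zeta\in\mathfrak p$; equivalently, $T^p-1=(T-1)^p$ over the residue field. Also note that $\zeta_p\in K$ because $p\mid\exp G$, so these roots of unity do lie in $\mathcal O_K$.
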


\begin{proof}
Write $X^{*}=\overline{X}^{\,\mathsf T}$ for the conjugate transpose. Column orthogonality
gives $X^{*}X=\operatorname{diag}(|C_G(x_i)|)$, so
$\det X\cdot\overline{\det X}=\prod_i|C_G(x_i)|$. Each $\sigma\in\Gal(K/\Q)$ permutes
$\Irr(G)$, hence permutes the rows of $X$, so $\sigma(\det X)=\pm\det X$; taking $\sigma$
to be complex conjugation gives $\overline{\det X}=\pm\det X$, hence
$(\det X)^2=\pm\prod_i|C_G(x_i)|$, and this Galois-fixed algebraic integer lies in $\Z$.
Since $X$ is
nonsingular over $K$, the rank falls modulo $\mathfrak p$ iff $\det X\in\mathfrak p$, iff
$(\det X)^2\in\mathfrak p$, iff $p\mid\prod_i|C_G(x_i)|$; and
$p\mid\prod_i|C_G(x_i)|$ iff $p\mid|G|$, because each $|C_G(x_i)|$ divides $|G|$ and
$|C_G(1)|=|G|$. The criterion is independent of the choice of $\mathfrak p$ above $p$,
ramified or not.
\end{proof}

\begin{remark}[Attribution]\label{rem:brauer}
Both the proposition and more are classical. Brauer theory gives the exact rank of the
reduced table, the number of $p$-regular classes of $G$: one has
$\chi(g)\equiv\chi(g_{p'})\pmod{\mathfrak p}$ for every $\chi$ (the $p$-part of $g$
contributes $p$-power roots of unity, which reduce to $1$), so the $p$-singular columns
collapse onto $p$-regular ones, while on the $p$-regular classes the reductions of the
ordinary characters span the irreducible Brauer characters, which are linearly
independent over the residue field; see \cite[Part~III]{Serre} or \cite{Navarro}. The
determinant identity $(\det X)^2=\pm\prod_i|C_G(x_i)|$ used in the proof is likewise
classical \cite{Isaacs}. We include the short proof only to keep the control
self-contained.
\end{remark}

The same nonsingularity bounds the selective probe unconditionally.

\begin{corollary}[The character probe cannot escape the group order]\label{cor:charbound}
For every finite group $G$, $\Cchar(G)\subseteq\rad(|G|)$.
\end{corollary}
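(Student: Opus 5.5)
The plan is to relate the orbit-trace difference lattice to the full character table $X$ by an explicit integral-in-$\mathcal O_K$ transformation, and then invoke the determinant identity from the proof of Proposition~\ref{prop:faithful}. Let $t$ be the number of Galois orbits of $\Irr(G)$ and $k$ the number of classes. The orbit-trace matrix $T$ ($t\times k$) satisfies $T=EX$, where $E\in\{0,1\}^{t\times k}$ is the orbit-incidence matrix. Each row of $E$ sums the characters in one orbit, and the rows of $E$ have disjoint supports, so $E$ has rank $t$. Since $X$ is invertible over $K$, $T$ has rank $t$. As in the proof of Theorem~\ref{thm:index2}, the difference lattice is spanned by the $t-1$ rows $\OrbTr(\chi)-\mathbf 1$ and has rank $t-1$. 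Its generator matrix is $D=FT=FEX$, where $F$ is the $(t-1)\times t$ integer matrix with rows $e_i-e_{\mathbf 1}$.

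Now fix a prime $p\nmid|G|$; the goal is to show that the mod-$p$ rank of $D$ is still $t-1$. Choose any prime $\mathfrak p$ of $\mathcal O_K$ above $p$. By Proposition~\ref{prop:faithful}, $\det X\notin\mathfrak p$, so $X$ is invertible over the localization $\mathcal O_{K,\mathfrak p}$ and over the residue field $\kappa=\mathcal O_K/\mathfrak p$. Reducing $D=(FE)X$ modulo $\mathfrak p$ therefore gives $\operatorname{rank}_\kappa\bar D=\operatorname{rank}_\kappa\overline{FE}$. The matrix $FE$ has integer entries, so its rank over $\kappa$ equals its rank over $\F_p\subseteq\kappa$. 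Similarly, since $D$ is an integer matrix, its rank over $\kappa$ equals its rank over $\F_p$. It therefore suffices to show that $FE$ has full rank $t-1$ modulo every prime, i.e.\ that the integer matrix $FE$ has $\Delta_{t-1}=1$.

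The remaining step is elementary. The row of $FE$ indexed by an orbit $\mathcal O_i\neq\{\mathbf 1\}$ is $\mathbf 1_{\mathcal O_i}-e_{\mathbf 1}$. Delete the column of the trivial character, and in each remaining orbit keep one representative column. The resulting $(t-1)\times(t-1)$ submatrix is the identity, so some maximal minor equals $1$, and $FE$ has full rank modulo every $p$. Combining the two reductions, $\operatorname{rank}_{\F_p}D=t-1$ for every $p\nmid|G|$, so $p\notin\Cchar(G)$ by the definition of the conductor (equivalently, by Theorem~\ref{thm:instrument}). This proves $\Cchar(G)\subseteq\rad(|G|)$.

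The main obstacle is the base change. The factorization $D=FEX$ lives over $\mathcal O_K$, not over $\Z$, so we must check that rank over $\F_p$ agrees with rank over $\kappa$ for integer matrices. This holds because rank is determined by the vanishing of minors, which is field-independent for subfields. We also need invertibility of $X$ modulo $\mathfrak p$, which is exactly the content of $\det X\notin\mathfrak p$ from Proposition~\ref{prop:faithful}.
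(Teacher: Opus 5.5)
Your proof is correct and follows essentially the paper's route: factor the orbit-trace table through the full character table $X$, use $\det X\notin\mathfrak p$ from Proposition~\ref{prop:faithful} to transfer ranks to $\mathcal O_K/\mathfrak p$ and then to $\F_p$, and check that the aggregation step has full rank modulo every prime. The only difference is bookkeeping: you fold the differencing matrix $F$ into the orbit-incidence matrix and exhibit an identity minor of $FE$, whereas the paper first shows $T$ has rank $t$ modulo $p$ and then deduces directly that the differences $O_i-O_1$ stay independent.
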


\begin{proof}
Fix $p\nmid|G|$ and a prime $\mathfrak p$ of $\mathcal O_K$ above $p$. The orbit-trace
table is $T=SX$, where $S$ is the $\{0,1\}$ matrix aggregating the rows of $X$ into their
$t$ Galois orbits; the rows of $S$ have disjoint nonempty supports, so $S$ has full row
rank $t$ over every field. By Proposition~\ref{prop:faithful}, $X$ is invertible over
$\mathcal O_K/\mathfrak p$, so $T$ has rank $t$ over $\mathcal O_K/\mathfrak p$, hence
over $\F_p$ ($T$ has rational-integer entries, and the rank of an integer matrix is the
same over every field of characteristic $p$). If
$\sum_{i\ge2}a_i(O_i-O_1)\equiv0\pmod p$ with $O_1,\dots,O_t$ the rows of $T$, then
$\sum_{i\ge2}a_iO_i-\bigl(\sum_{i\ge2}a_i\bigr)O_1\equiv0$, forcing every $a_i\equiv0$;
the difference lattice therefore has full rank $t-1$ modulo $p$, equal to its rational
rank, and $p\notin\Cchar(G)$.
\end{proof}

The faithful reduction is thus a full-support probe that cannot discriminate: it flags
precisely $\rad(|G|)$, always. Corollary~\ref{cor:charbound} turns the containment
observed on all $469$ nonabelian groups of order $\le64$ into a theorem: the orbit-trace
probe forgets layers of the faithful one but can never add primes outside $|G|$, in
proven contrast with the commuting probe, which can (Corollary~\ref{cor:nondivisor}). The
selective probe is the discriminating object, not an artifact.

\section{Open problems}\label{sec:open}

The results above open a program; this section states it. The central open problem is to
classify the Smith torsion of the centralizer-type incidence
matrix $\BG$ (Section~\ref{sec:Bg}) beyond the CA case. On CA-groups the nontrivial Smith
spectrum is $[\,b-1\,]$, so the conductor is
$\rad(b-1)$; the layered case (e.g.\ $\mathrm{SmallGroup}(64,10)$, factors $[2,2,4]$) is open
and is the substance of the program. Direct products are settled at full spectrum level by Theorem~\ref{thm:product}.
Remaining theorem targets: the $p^2$-protection law of Proposition~\ref{prop:p2}, which for
CA-groups is equivalent, by Theorem~\ref{thm:CA}, to the purely arithmetic statement that
$p^2\mid|G'|$ forces $p\mid b-1$, itself open, as is the layered case;
Conjecture~\ref{conj:union}; and exact spectra for the layered families. The
extraspecial groups beyond order $p^3$ (e.g.\ order $32$, spectrum $[2^6,4^4,8]$) are the
first testbed, and by Theorem~\ref{thm:isoclinism} any answer is automatically an isoclinism
statement. The isoclinism question is settled affirmatively by
Theorem~\ref{thm:isoclinism}, which reframes the program: classifying the Smith torsion of
$\BG$ is a classification over isoclinism classes, and the natural refinement is to ask
which isoclinism invariants (beyond $\rad(b-1)$ in the CA case) the spectrum computes. We also ask
whether the diagnostic family $\mathcal P$ of Proposition~\ref{prop:comm} admits a structural
characterization in terms of $\BG$, and whether $\Ccomm(G)$ can be empty for a nonabelian
group; Corollary~\ref{cor:bge3} rules this out for CA-groups.

\section{Reproducibility}\label{sec:repro}

This section is the audit trail: what was computed, on which ranges, with what
instruments, and how every count can be re-derived. All computations are exact and
deterministic. Group theory and integer Smith normal forms are
computed in GAP \cite{GAP} over the SmallGroups library \cite{BEO}; independent cross-checks
are performed in Python; all runs are CPU-only.
Counts in Section~\ref{sec:commuting} are exhaustive to order $128$; the character-probe
and negative-control counts (Sections~\ref{sec:theorem}, \ref{sec:character},
and~\ref{sec:negatives}) to order $64$, the smaller range reflecting the cost of exact
character tables over cyclotomic fields relative to centralizer computations; the CA and $\BG$
verifications are exact on the listed groups. A negative-control suite (Section~\ref{sec:framework})
validates the rank-drop computation against independently derived values before any law
is asserted.
All GAP and Python scripts, together with their run outputs, are available from the author
on request and will be deposited in a public repository, with DOI, with the final
version, so that any claim in the paper can be re-derived from source.
$\mathrm{SmallGroup}(96,64)$, the first-power deviation discussed in
Sections~\ref{sec:commuting} and~\ref{sec:deform}, has conductor
$\{2\}$ while $\rad|G'|=\{2,3\}$, and is recorded as such.

\section*{Disclosure statement}
The author reports there are no competing interests to declare.

\section*{Data availability statement}
All computational claims in this paper are exactly reproducible from the GAP and Python
scripts described in Section~\ref{sec:repro}, which are available, together with their run
outputs, from the author upon reasonable request; a public archive of the scripts and
outputs, with DOI, will accompany the final version.

\section*{Research methodologies involving AI tools}
Large language model assistance was used in the preparation of this manuscript, in
accordance with the Taylor \& Francis policy on research methodologies involving AI tools.
Specifically: (i)~the originality and accuracy of all content has been confirmed by the
author; every theorem was proved and checked by the author, and every computational claim
was verified by the author's own GAP and Python code; (ii)~the AI tools used were Claude
(Anthropic), models Opus~4.8 and Fable~5, used for adversarial review of proof
exposition, suggestions for computational checks, and editorial revision; all
computations reported in this paper were performed and verified by the author's own GAP
and Python scripts; (iii)~the author has checked the
terms of use of the specific AI tool employed and confirms suitability for publication;
(iv)~the author takes full responsibility for the integrity of the whole content, including
the accuracy of all references. The author retains full records of the methods undertaken,
including complete descriptions and records of the prompts used, which will be shared with
the journal upon request.

\end{document}